\newcolumntype{P}[1]{>{\centering\arraybackslash}p{#1}} 
\newcommand{\ra}{\rangle}
\newcommand{\la}{\langle}
\newcommand{\fg}{\mathfrak g} 
\newcommand{\fh}{\mathfrak h}
\newcommand{\fsl}{\mathfrak{sl}} 
\newcommand{\Tor}{{\mathrm{Tor}}}
\newcommand{\im}{{\mathrm{im}}} 
\newcommand{\Auh}{{\mathrm{Auh}\,}}
\renewcommand{\NG}{{\mathrm{N}_G}}
\newcommand{\CG}{{\mathrm{C}_G}}
\newcommand{\CL}{{\mathrm{Cl}_G}} 
\newcommand{\SL}{{\mathrm{SL}}}
\newcommand{\SU}{{\mathrm{SU}}} 
\newcommand{\PSL}{{\mathrm{PSL}}} 
\newcommand{\Sp}{{\mathrm{Sp}}} 
\newcommand{\SO}{{\mathrm{SO}}} 
\newcommand{\Spi}{{\mathrm{Spin}}}
\newcommand{\SSpi}{{\mathrm{SSpin}}} 
\newcommand{\bC}{{\mathbb C}}
\newcommand{\bd}{{\mathbf d}} 
\newcommand{\bF}{{\mathbb F}} 
\newcommand{\bH}{{\mathbb H}} 
\newcommand{\bK}{{\mathbb K}}
\newcommand{\bQ}{{\mathbb Q}}
\newcommand{\bZ}{{\mathbb Z}}
\newcommand{\mP}{{\mathcal P}}
\newtheorem{conh}{Conjecture}
\newtheorem{thmh}[conh]{Theorem}
\newtheorem{defn}{Definition}[section]
\newtheorem{theorem}[defn]{Theorem}
\newtheorem{lem}[defn]{Lemma}
\newtheorem{cor}[defn]{Corollary}
\newtheorem{prop}[defn]{Proposition}
\newtheorem{question}[defn]{Question}
\begin{document}
\title{Topological rigidity of $\SL_2$-quotients}
\author{Dylan Johnston}
\email{dpj42@hotmail.com}
\address{Department of Mathematics, University of Warwick, Coventry, CV4 7AL, UK}
\author{Dmitriy Rumynin (corresponding author)}
\email{D.Rumynin@warwick.ac.uk}
\address{Department of Mathematics, University of Warwick, Coventry, CV4 7AL, UK}
\date{September 13, 2023}
\subjclass{Primary  55P15; Secondary 17B08, 57T20}
\keywords{nilpotent orbit, unipotent class, homogeneous space, homotopy type, K-theory}

\begin{abstract}
We investigate  the homotopy type of a certain homogeneous space for a simple complex Lie group.
We calculate some of its classical topological invariants and introduce a new one.
We also propose several conjectures about its topological rigidity.  
\end{abstract}

\maketitle

Let $G$ be a simple simply-connected complex Lie group.
By Jacobson-Morozov Theorem, its unipotent conjugacy classes are in one-to-one correspondence with conjugacy classes
of homomorphisms:
\[
\varphi_u : \SL_2 (\bC)\rightarrow G
\qquad \longleftrightarrow \qquad 
u = \varphi_u \begin{pmatrix} 1&1\\0&1\end{pmatrix}\, .
\]
For each unipotent element $u\in G$ we consider the corresponding quotient topological space
$X_u = X_u(G) \coloneqq G/H_u$ where $H_u= \varphi_u (\SL_2 (\bC))$.
This space is an affine algebraic variety: it is well-known that $G/H$ is affine if and only if $H$ is reductive.
In the present paper we investigate the homotopy type of $X_u$.
The following is our first main result:
\begin{thmh} \label{main_theorem}
  Consider two simple simply-connected complex Lie groups $G$, $K$ and their
  unipotent elements $u\in G$, $v\in K$.
 If  $X_u(G)$ and $X_v(K)$ are homotopy equivalent, then $G\cong K$.
\end{thmh}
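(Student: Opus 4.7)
The plan is to extract enough homotopy invariants of $X_u$ to recover the Cartan type of $G$, then invoke the classification of simple simply-connected algebraic groups. The case $u=1$ gives $X_u=G$ and is handled by the classical fact that simple simply-connected complex algebraic groups are determined up to isomorphism by their homotopy type; assume therefore $u\neq 1$. Pass to maximal compact forms: $G$ deformation-retracts onto $G_c$ and $H=\varphi_u(\SL_2(\bC))$ onto $H_c\cong\SU(2)$ or $\SO(3)$, so $X_u\simeq G_c/H_c$.

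Next, I would compute the rational homotopy type from the classifying-space fibration $X_u\to BH_c\to BG_c$. With $H^*(BG_c;\bQ)=\bQ[y_1,\ldots,y_r]$ (where $|y_i|=2d_i$ are the fundamental degrees and $d_1=2$ always) and $H^*(BH_c;\bQ)=\bQ[z]$, $|z|=4$, the induced ring map sends $y_i\mapsto c_i z^{d_i/2}$ (zero for odd $d_i$), and $c_1$ is proportional to the Dynkin index of $\varphi_u$---a positive integer whenever $u\neq 1$. Thus $\bQ[y_1,\ldots,y_r]\to\bQ[z]$ is surjective, with kernel generated by $r-1$ explicit elements (one for each $i\geq 2$), and the relative Koszul--Sullivan extension yields $(\Lambda(x_2,\ldots,x_r),0)$ as a minimal Sullivan model of $X_u$. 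Consequently $X_u$ is rationally formal, with
\[
H^*(X_u;\bQ)\cong\Lambda(x_2,\ldots,x_r),\quad |x_i|=2d_i-1,
\]
as graded rings, \emph{independently of the choice of $u\neq 1$}. This rational cohomology records the multiset of fundamental degrees of $G$, which by the classification of complex simple Lie algebras determines the Cartan type of $G$ up to the coincidence $B_n\leftrightarrow C_n$ for $n\geq 3$ (smaller $n$ produces coincident or isomorphic types).

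The main obstacle is resolving this residual $B_n/C_n$ ambiguity, for which purely rational methods are powerless, since $\Spi(2n+1)$ and $\Sp(n)$ are rationally indistinguishable. Here I would appeal to integral cohomology or $K$-theory. Classically, $H^*(\Spi(2n+1);\bZ)$ carries $2$-primary torsion for $n\geq 3$ whereas $H^*(\Sp(n);\bZ)$ is torsion-free; the Gysin sequence of the principal $H_c$-bundle $G_c\to X_u$, together with an analysis of the Euler class in $H^4(X_u;\bZ)$, should propagate this torsion distinction to $X_u$, separating the two cases. Alternatively---and this is perhaps the ``new invariant'' alluded to in the abstract---one could use complex $K$-theory, where the Atiyah--Segal completion theorem links $K^*(X_u)$ to $R(G_c)$ and $R(H_c)$, and $R(\Spi(2n+1))\not\cong R(\Sp(n))$ as rings. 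Either route should supply the finer distinction needed to conclude $G\cong K$.
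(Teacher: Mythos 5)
Your first half matches the paper: Section~\ref{RaHoT} establishes via a Sullivan model (Proposition~\ref{typeGH}) that $X_u(G)\stackrel{\bQ}{\simeq}\prod_{i\geq 2}S^{2d_i-1}$ for $u\neq 1$, and Corollary~\ref{RatHomTypeCorollary} concludes exactly as you do that only the $B_n/C_n$ coincidence of fundamental degrees survives.

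For the $B_n/C_n$ ambiguity, however, your sketch has two genuine gaps, and the paper's resolution is different. First, the Gysin route: the Gysin sequence
\[
\cdots\to H^{k-4}(X_u)\xrightarrow{\cup e}H^k(X_u)\to H^k(G_c)\to H^{k-3}(X_u)\to\cdots
\]
(and it applies only when $H_c\cong S^3$, not when $H_c\cong\SO(3)$) puts $H^k(G_c)$ in a short exact sequence whose right-hand term is a subgroup of $H^{k-3}(X_u)$ but whose left-hand term is a \emph{cokernel}; a cokernel of a map between torsion-free groups can carry torsion. So $2$-torsion in $H^*(\Spi(2n+1)_c;\bZ)$ does not automatically force torsion in $H^*(X_u;\bZ)$ -- you would need to analyze the Euler class carefully, and as written the argument doesn't close. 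Second, the K-theory route rests on the claim that $R(\Spi(2n+1))\not\cong R(\Sp(n))$ as rings, which is false: both are polynomial rings $\bZ[y_1,\dots,y_n]$, hence abstractly isomorphic. What differs is the embedding into the character ring (equivalently, the $\lambda$-ring structure), and the paper's K-theoretic invariant $I_u$ (Section~\ref{KTheory}) is built precisely to exploit the relative data $\overline{y_i}=[\varphi_u^*V(\varpi_i)]$, not the abstract ring. The paper instead resolves $B_n$ vs.\ $C_n$ by computing $\pi_4,\pi_5,\pi_6$ of $X_u$ from the long exact sequence and Bott periodicity (Section~\ref{HiHoGr}), combined with the fact that $\pi_k(\varphi_m)$ is an isomorphism for $k\leq 5$ in type $C_r$; the upshot is $\pi_5(X_u(\Spi_{2r+1}))=\bZ/2$ always, while $\pi_5(X_v(\Sp_{2r}))$ is never $\bZ/2$, which is what Corollary~\ref{BC_separate} records. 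If you want to salvage your route you would have to make the Euler class computation explicit; as stated, the second half of your proof does not go through.
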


To prove this, we compute some of its classical topological invariants: rational homotopy type and
some of the homotopy groups.
Our second main result is related to this calculation:
$\pi_2 (X_u)$ depends on whether $u$ is quite even or not.
All quite even orbits are described in Section~\ref{VeryEven}.
Out second main result is
Theorem~\ref{QuiteEvenClassical}: it describes quite even orbits in classical types in terms of partitions.

Our third main result is Theorem~\ref{2nd_thm}: we show that a certain ideal $I_u\lhd \bZ[x]$ is a topological invariant
of $X_u$. This invariant originates  from the topological K-theory but is much easier to use.

In light of Theorem~\ref{main_theorem}, the homotopy classification of the spaces
$X_u(G)$ reduces to the spaces with the same group $G$.
We have collected enough evidence to propose the following conjecture.
\begin{conh} \label{wk_rigid} {\textrm (Weak topological rigidity)}
  Let $u,v\in G$ be unipotent elements, $H_u$ and $H_v$ -- their corresponding subgroups.
  The spaces  $X_u(G)$ and $X_v(G)$ are homotopy equivalent
  if and only if there exists an automorphism $\psi : G \rightarrow G$ such that $\psi (H_u) = H_v$.
\end{conh}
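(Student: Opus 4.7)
The ``if'' direction is elementary: given an algebraic automorphism $\psi \in \Aut(G)$ with $\psi(H_u)=H_v$, the assignment $gH_u \mapsto \psi(g)H_v$ is a well-defined homeomorphism $X_u \to X_v$.

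For the ``only if'' direction, my strategy is to combine the three topological invariants already developed in the paper---the K-theoretic ideal $I_u$ of Theorem~\ref{2nd_thm}, the rational homotopy type, and $\pi_2(X_u)$ together with the quite-even dichotomy of Theorem~\ref{QuiteEvenClassical}---into a system of invariants fine enough to separate the $\Aut(G)$-orbits on unipotent classes of $G$. Setting
$$\iota(u) \;=\; \bigl(I_u,\; H^*(X_u;\bQ),\; \pi_2(X_u)\bigr),$$
one has $\iota(u)=\iota(v)$ whenever $X_u\simeq X_v$, so it suffices to prove that the induced map from $\Aut(G)$-orbits of unipotent elements to the image of $\iota$ is injective.

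Since the collection of unipotent classes is finite and explicitly described (by partitions in the classical types and by Bala--Carter labels in the exceptional types), I would proceed by a case-by-case analysis. First, for each simple simply-connected $G$, list the $\Aut(G)$-orbits of unipotent classes: these coincide with conjugacy classes except for the graph-automorphism fusions in types $A_n$, $D_4$, $D_n$ (swapping the two very even partitions) and $E_6$. Second, tabulate $\iota(u)$ for one representative per orbit using the tools established earlier: $I_u$ is encoded in the reductive centraliser $\CG(H_u)$, the rational cohomology $H^*(X_u;\bQ)$ is controlled by a formal Sullivan model of $G/H_u$, and $\pi_2(X_u)$ is computed from the long exact sequence of the fibration $H_u\hookrightarrow G\twoheadrightarrow X_u$ together with Theorem~\ref{QuiteEvenClassical}. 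Third, check that no coincidence of $\iota$ occurs across distinct $\Aut(G)$-orbits.

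The main obstacle will be this last step in type $D_n$ and in the large exceptional groups $E_7$ and $E_8$, where unrelated unipotent classes can have isomorphic reductive centralisers and hence identical $I_u$; in every such potential collision one must exhibit a separating rational cohomological class or a $\pi_2$-obstruction arising from the quite-even condition. If a residual collision survives all three invariants, it would be necessary either to enrich $I_u$ with its natural $\lambda$-ring structure coming from topological K-theory or to bring in further integral invariants of $X_u$, for instance torsion in $\pi_3(X_u)$ or secondary characteristic classes; the plausibility of Conjecture~\ref{wk_rigid} rests on the expectation that such enrichments are in fact unnecessary.
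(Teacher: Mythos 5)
This statement is a \emph{conjecture} in the paper, not a theorem: the authors explicitly say ``We have collected enough evidence to propose the following conjecture,'' and no proof is offered. Your ``only if'' direction is likewise not a proof but a programme sketch, which you acknowledge in your closing sentence. The ``if'' direction you give is correct and elementary, but that is not where the difficulty lies.

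The decisive gap is that the paper's own Proposition~\ref{finalprop} demonstrates that exactly the system of invariants you propose---$I_u$, the rational homotopy type, $\pi_2$, together with the Dynkin index on $\pi_3$---is \emph{not} fine enough to separate $\Aut(G)$-orbits: in dimension~$75$ there remain three pairs of non-conjugate unipotent classes (one in $B_6$, two in $C_6$) where these invariants all coincide and the question is left open. So the key step in your plan---``check that no coincidence of $\iota$ occurs across distinct $\Aut(G)$-orbits''---is already known to fail for the tuple you propose, and the conjecture cannot be settled without genuinely new tools. Two smaller inaccuracies: the ideal $I_u$ is not ``encoded in the reductive centraliser $\CG(H_u)$''; by~\eqref{Iu_def} it is determined by the restrictions $\varphi_u^\ast V(\varpi_i)$ of the fundamental representations to $H_u$, which is a different (and generally finer) piece of data. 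And $\pi_3(X_u)\cong\bZ/\nu(u)$, the Dynkin-index invariant, is one of the paper's primary tools (Proposition~\ref{low_pi}), so it should appear in your tuple $\iota$ from the outset rather than be relegated to a fallback. The paper does verify the conjecture in type $D_4$ (Section~\ref{Example2}) by an ad hoc check of the six surviving partitions, which illustrates that even in small rank the argument is a case analysis and not a consequence of a uniform separating invariant.
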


Such an isomorphism $\psi$ could be inner: then $u$ and $v$ belong to the same conjugacy class.
However, if the Dynkin diagram has diagram automorphisms, then
not all automorphisms of $G$ are inner and
different conjugacy classes can be related by $\psi$. See Section~\ref{Example2} for more detailed discussion.

Let us now put forward two further, quite bold conjectures. While there is little evidence to support them,  we feel that they are interesting question:
proving or disproving them would be remarkable.
Notice that we need to exclude $G=X_1(G)$ since the inverse element map
$x\mapsto x^{-1}$ is a homotopy equivalence, not homotopic to an equivariant map
  \begin{conh} \label{st_rigid} {\textrm (Strong topological rigidity)}
    Suppose $u,v\in G$, $v\neq 1\neq u$ and
    $\phi: X_u(G) \rightarrow X_v(G)$ is continuous map that is a homotopy equivalence.
    Then $\phi$ is homotopic to a $G$-equivariant map.  
\end{conh}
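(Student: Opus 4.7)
The plan is to reduce the question to one about self-homotopy equivalences of a single space $X_u$ and then to attack it via obstruction theory on the Borel construction. Granting Conjecture~\ref{wk_rigid}, we choose $\psi\in\mathrm{Aut}(G)$ with $\psi(H_u)=H_v$; this yields a $G$-equivariant homeomorphism $\bar{\psi}:X_u\to X_v$ (where $G$ acts on the source through $\psi^{-1}$). Replacing $\phi$ by $\bar{\psi}^{-1}\circ\phi$ reduces us to showing that every self-homotopy equivalence of $X_u$ is homotopic to a $G$-equivariant self-map. Since
\[
\mathrm{Map}^G(G/H_u,G/H_u)=(G/H_u)^{H_u}=N_G(H_u)/H_u,
\]
the reformulated goal is to show that the natural map
\[
N_G(H_u)/H_u\longrightarrow \pi_0\,\mathrm{haut}(X_u)
\]
is surjective.

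The main step is to exploit the Borel fibration
\[
X_u\longrightarrow EG\times_G X_u\simeq BH_u\longrightarrow BG.
\]
A $G$-equivariant self-map of $X_u$ is precisely a self-map of this fibration over $BG$, so the task becomes to lift an arbitrary self-homotopy equivalence $\phi$ of $X_u$, up to homotopy, to a self-map of $BH_u$ fibred over $BG$. The rational part is tractable: since $H_u\cong\SL_2(\bC)$ has a particularly simple rational homotopy type, the Sullivan minimal model of $X_u=G/H_u$ can be written down explicitly from the rational homotopy computations already used in the paper, and one can match $\pi_0\,\mathrm{haut}(X_u)_{\bQ}$ with $N_G(H_u)/H_u$ directly. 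One would then proceed prime-by-prime, with obstructions to lifting living in groups of the form $H^*(BG;\pi_*\mathrm{Map}(X_u,X_u))$ computed via the Federer evaluation spectral sequence.

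The main obstacle is integral. One must control the connected components of $\mathrm{haut}(X_u)$ and rule out exotic self-equivalences not arising from $N_G(H_u)$. The invariant $I_u$ of Theorem~\ref{2nd_thm} provides some integral input, but it is tailored to distinguishing $X_u$ for varying $u$ rather than to detecting self-equivalences of a fixed $X_u$. Rigidity theorems of this flavour are available in isolated families (flag varieties, Stiefel manifolds, certain generalised flag manifolds), yet no uniform technique covers the present generality. Converting the obstruction-theoretic information into the required surjectivity is where the bulk of the work would lie, and this is almost certainly why the authors describe the conjecture as ``bold'' with little supporting evidence.
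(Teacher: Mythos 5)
The statement you are asked to prove is Conjecture~\ref{st_rigid}, which the paper explicitly presents as an \emph{open} and ``quite bold'' conjecture with ``little evidence to support'' it; the paper contains no proof of it. Consequently there is no proof in the paper for your attempt to be measured against, and your submission is not a proof either: you lay out a plausible programme (reduce to self-equivalences of $X_u$ via Conjecture~\ref{wk_rigid}, identify $\mathrm{Map}^G(X_u,X_u)$ with $N_G(H_u)/H_u$, and then lift a homotopy self-equivalence of $X_u$ through the Borel fibration $X_u\to BH_u\to BG$ by obstruction theory), but, as you yourself concede in the final paragraph, the integral obstruction step is left entirely unresolved, and that is precisely where the content of the conjecture lives. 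A few additional cautions on the reduction you do carry out: the preliminary step presupposes Conjecture~\ref{wk_rigid}, which is also open, so the reduction is conditional; and the map $\bar\psi$ you build from $\psi\in\Aut(G)$ is $G$-equivariant only after twisting the $G$-action on the source by $\psi^{-1}$, so composing a (strictly) $G$-equivariant self-map of $X_u$ with $\bar\psi$ does not produce a (strictly) $G$-equivariant map $X_u\to X_v$ unless you are willing to broaden ``$G$-equivariant'' in the conjecture's conclusion to ``equivariant up to an automorphism of $G$.'' That broadening may well be what the authors intend (see Section~\ref{Example2} on diagram automorphisms), but it should be made explicit, since without it the reduction does not go through. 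In short: the approach is a reasonable research sketch but cannot be regarded as a proof, and the paper offers none for comparison.
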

  All $G$-equivariant maps $X_u\rightarrow X_u$ are easy to describe:
  once the coset $H$ is mapped to a coset $zH$, the whole map has the form $xH \mapsto xzH$.
  To be well-defined, $z$ needs to belong to the normaliser $\NG(H)$.
Hence, we have a group homomorphism $\eta: \NG(H) \rightarrow \Auh (X_u)$, to the group of homotopy self-equivalences.
  
  Its component group $\pi_0 (\NG(H)) = \NG(H)/\NG(H)_0$ is the same as components groups of the centralisers
  $\pi_0 (\CG(H))$ and $\pi_0 (\CG(u))$. It also coincides with the fundamental group of the conjugacy class $\pi_1 (\CL(u))$.
Our final conjecture is about this group.
 \begin{conh}  The group homomorphism
$\pi_0 ( \eta ): \pi_0 (\NG(H)) \rightarrow \pi_0 (\Auh (X_u))$
is an isomorphism. 
\end{conh}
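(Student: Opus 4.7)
The plan is to prove surjectivity and injectivity of $\pi_0(\eta)$ separately, with $u\ne 1$ (the same tacit exclusion as in Conjecture~\ref{st_rigid}, since $\pi_0(\Auh(G))$ is generally nontrivial — via outer automorphisms of $G$ acting on the left — while $\pi_0(\NG(\{1\}))=\pi_0(G)=1$).

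For surjectivity, I would invoke Conjecture~\ref{st_rigid}: any self-homotopy-equivalence $\phi$ of $X_u$ is homotopic to a $G$-equivariant map $\phi'$. A $G$-equivariant continuous map $X_u\to X_u$ necessarily has the form $xH\mapsto xzH$: writing $\phi'(eH)=zH$ and using equivariance yields $\phi'(xH)=xzH$, and well-definedness on cosets forces $z\in\NG(H)$. Hence $\phi'=\eta(z)$, so $[\phi]$ lies in the image of $\pi_0(\eta)$.

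For injectivity, suppose $\eta(z)\simeq\mathrm{id}_{X_u}$; I must show $z\in\NG(H)_0$. Since $\SL_2$ and $\PSL_2$ have no outer automorphisms, one has $\NG(H)=H\cdot\CG(H)$, and connectedness of $H$ reduces the question to showing that the image $\bar z$ in $A_u:=\pi_0(\CG(H))\cong\pi_0(\NG(H))$ is trivial. The strategy is to exhibit a homotopy invariant $\mathcal I(X_u)$ carrying a natural, faithful $A_u$-action (via right translation by coset representatives); then $\eta(z)\simeq\mathrm{id}$ acts trivially on $\mathcal I$, forcing $\bar z=1$. Candidate invariants include the rational cohomology $H^*(X_u;\bQ)$, the K-theoretic ideal $I_u$ of Theorem~\ref{2nd_thm}, and — for quite-even $u$ — the finite group $\pi_2(X_u)$.

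The principal obstacle is proving faithfulness uniformly. The component groups $A_u$ are classified (Alekseevskii, Sommers), but their action on any chosen invariant of $X_u$ must be analyzed case by case using the Bala--Carter classification of unipotent classes. Classical types should reduce to the combinatorics of partitions used in Section~\ref{VeryEven}; exceptional types, where $A_u$ can be as large as $S_5$, demand more delicate case analysis. A conceptual, type-free argument — perhaps via equivariant K-theory of the nilpotent cone, or via Springer theory for the $A_u$-action on cohomology — would be preferable but remains to be developed.
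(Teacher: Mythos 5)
This statement is one of the paper's \emph{open conjectures}: the paper offers no proof, remarking only that Conjecture~\ref{st_rigid} would imply surjectivity of $\pi_0(\eta)$. So there is no argument in the paper against which to compare yours; what I can do is assess your sketch on its own terms.

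Your surjectivity step is exactly the paper's observation (modulo noting that you must additionally know $\pi_0(\eta)$ is well defined, i.e.\ that $\NG(H)_0$ lands in the identity component of $\Auh(X_u)$; this does follow, since a path from $1$ to $z$ in $\NG(H)_0$ gives a homotopy from $\mathrm{id}$ to $\eta(z)$). Your reduction $\NG(H) = H\cdot\CG(H)$ (because $\Aut(\SL_2)$ and $\Aut(\PSL_2)$ are inner) is also correct and standard.

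The genuine gap is the injectivity step, and two of your three candidate invariants cannot work even in principle. First, $\pi_2(X_u)$ is either $0$ or $\bZ/2$ by Proposition~\ref{low_pi}, hence admits no nontrivial automorphism; it cannot carry a faithful action of any nontrivial $A_u$. Second, the ideal $I_u\lhd\bZ[x]$ has no evident $A_u$-action: the natural $\NG(H)$-action on $R(H)=\bZ[x]$ is by conjugation on $H$, which is inner on $H$ and hence trivial on $R(H)$; the ideal $I_u$ is defined purely in terms of restriction of $G$-representations and a fixed augmentation, and $A_u$ does not act on it. Third, $H^*(X_u;\bQ)$ is an exterior algebra on generators in degrees $2d_2-1<\dots<2d_r-1$ (Proposition~\ref{typeGH}); when these degrees are distinct, any graded-algebra automorphism acts on each generator by a scalar, so the image of any finite group of automorphisms lies in an abelian $2$-group. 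But $A_u=\pi_0(\NG(H))$ can be nonabelian and of order divisible by odd primes (e.g.\ $S_3$, $S_4$, $S_5$ in exceptional types), so any such element of odd order would be invisible in rational cohomology. Thus none of the invariants you propose can yield faithfulness uniformly, and the strategy as written cannot close the injectivity half of the conjecture. This does not make the conjecture false, but it does mean a genuinely different invariant (or a different mechanism altogether) is needed; the final paragraph of your proposal correctly anticipates this, but that is where the real mathematical content would have to live.
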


 Note that Conjecture~\ref{st_rigid} implies surjectivity of $\pi_0 ( \eta )$.
 
 Let us now provide a detailed
description of the content of the present paper.
We start by reminding the reader the basic facts about $X_u$ and its homotopy groups $\pi_n (X_u)$  in Section~\ref{intro}.

The next three sections are devoted to calculations of homotopy groups.
In Section~\ref{RaHoT}, we compute the rational homotopy type of $X_u$ and all the groups $\pi_n (X_u)\otimes \bQ$.
In Section~\ref{VeryEven} we describe all the quite even orbits,
essentially computing all $\pi_2 (X_u)$. 
We compute some of the higher homotopy groups 
$\pi_n (X_u)$ for $n \leq 6$ in Section~\ref{HiHoGr}: this completes our proof
of  Theorem~\ref{main_theorem}.

We turn our attention to K-theory in Section~\ref{KTheory}:
it can be computed using the Hodgkin spectral sequence and Kozsul resolution.
This yields $I_u$, a new homotopy invariant of $X_u$. 
In the penultimate Section~\ref{Example}, we employ our two techniques to tackle Conjecture~\ref{wk_rigid}
in dimension 75. This is an interesting example because these homogeneous spaces come from three different groups of dimension 78:
$B_6$, $C_6$ and $E_6$.
We are coming slightly short: cf. Proposition~\ref{finalprop}.
In the final Section~\ref{Example2}, we clarify the effect of the Dynkin diagram automorphisms.

The authors would like to thank
Michael Albanese,
John Greenlees,
John Jones and 
Miles Reid
for insightful information and/or inspirational conversations. 

  \section{Introduction} \label{intro}
  A thoughtful reader will notice that the same questions can be asked for compact groups. Indeed, these languages are equivalent (cf. \cite{JoRuTh}).
  More precisely, $G\simeq G_c$, i.e., $G$ is homotopy equivalent to its maximal compact subgroup, while $X_u \simeq G_c/H_c$ \cite[Lemma 2.5]{RuTa}.
  The latter is a compact oriented manifold, hence, its dimension is a homotopy invariant.
  \begin{prop} \label{Xu_dim}
    If $X_u \simeq X_v$, then $\dim (X_u)=\dim (X_v)$.
  \end{prop}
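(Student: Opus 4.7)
The plan is to leverage the identification $X_u \simeq G_c/H_c$ already recorded in the excerpt, which exhibits $X_u$ up to homotopy equivalence as a compact oriented topological manifold. The dimension of such a manifold is read off from its integral homology, and homology is a homotopy invariant, so we are done once we combine these two standard facts.

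First I would fix notation: by \cite[Lemma 2.5]{RuTa} and the retraction to the maximal compact subgroup, $X_u \simeq G_c/H_c$ and $X_v \simeq K_c/(H_v)_c$, where both right-hand sides are compact oriented manifolds. The real dimension of $G_c/H_c$ is $\dim_{\bR} G_c - \dim_{\bR} H_c = \dim_{\bC} G - \dim_{\bC} H$, which is exactly $\dim(X_u)$ (either as the complex dimension of the affine variety, or via the factor of two that cancels on both sides when comparing to $X_v$). So matching $\dim(X_u)$ with $\dim(X_v)$ reduces to matching the real dimensions of the two compact manifold models.

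Second, I would invoke the classical fact that for a compact connected oriented $n$-manifold $M$, the fundamental class gives $H_n(M;\bZ) \cong \bZ$ while $H_k(M;\bZ) = 0$ for $k > n$. Thus
\[
n \;=\; \max\{\, k \;:\; H_k(M;\bZ) \neq 0 \,\}.
\]
Connectedness of $G_c/H_c$ is immediate from connectedness of $G_c$ (which holds since $G$ is simply connected) together with the fact that $G_c \to G_c/H_c$ is a fiber bundle, so there is no real obstacle here; orientability is the input from the excerpt.

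Third, a homotopy equivalence $X_u \simeq X_v$ induces isomorphisms $H_k(X_u;\bZ) \cong H_k(X_v;\bZ)$ in every degree, so the top nonvanishing degree is the same on both sides. Combined with the previous paragraph, this gives $\dim_{\bR}(G_c/H_c) = \dim_{\bR}(K_c/(H_v)_c)$, hence $\dim(X_u) = \dim(X_v)$. There is no substantial hurdle: the only thing that is not a direct citation is connectedness of the compact model, and that is a one-line consequence of simple-connectedness of $G$.
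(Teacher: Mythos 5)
Your proof is correct and is essentially the argument the paper is implicitly invoking: the paper just remarks that $X_u\simeq G_c/H_c$ is a compact oriented manifold whose dimension is therefore a homotopy invariant, and you have simply spelled out the standard reason (top nonvanishing integral homology detects the dimension). No discrepancy to report.
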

  
  For the rest of the section, assume that $u\neq 1$.
Let  $X=X_u(G)=G/H$, where $H$ is the image of $\varphi_u$. The quotient map
  $G \rightarrow X$ is a Serre fibration with path-connected $X$ and fibre $H$.
  This yields a long exact sequence in homotopy
  \begin{equation} \label{long_es}
    \dots \rightarrow \pi_i(H) \rightarrow \pi_i(G) \rightarrow \pi_i(X) \rightarrow \pi_{i-1}(H) \rightarrow \dots \rightarrow \pi_0(G) \rightarrow \pi_0(X) \rightarrow 0.
    \end{equation}
  Since we know some of the low homotopy groups
  \begin{equation} \label{homot_G}
    \pi_2(G)=\pi_2(H)=\pi_1(G)=0, \
  \pi_3(G)=\pi_3(H)=\bZ, 
  \end{equation}
  the end of the long exact sequence~\eqref{long_es} looks like
  \begin{equation} \label{long_es2}
  \dots \rightarrow \bZ \xrightarrow{\pi_3 (\varphi_u)} \bZ \rightarrow \pi_3(X) \rightarrow 0 \rightarrow 0 \rightarrow \pi_2(X)
  \rightarrow \pi_1(H) \rightarrow 0.
  \end{equation}
  The integer $\nu (u) \coloneqq \pi_3 (\varphi_u)$ is the Dynkin index of the Lie algebra homomorphism $d\varphi_u: \fsl_2 \rightarrow \fg$
  \cite[Theorem 17.2.2]{onishchik1994topology}.
  Their values are known for all $u$ \cite{Dynkin1952,PANYUSHEV201515}.

  Note that $H$ is either $\PSL_2 (\bC)$ and $\pi_1 (H) = \bZ/2$, or $\SL_2 (\bC)$ and $\pi_1 (H) = 0$.
  Let us call the unipotent $u$ {\em quite even} if $H=\PSL_2 (\bC)$.
This is reminiscent to the noting of even unipotent:
$u$ is even if and only if the image of  $\SL_2 (\bC)$ in the adjoint group $G_{ad}=G/Z(G)$ is $\PSL_2 (\bC)$.
Coupled with this information, 
  the long exact sequence~\eqref{long_es2} yields the three lower homotopy  groups
  \begin{prop} \label{low_pi}
  If $u\neq 1$ and $X_u = X_u(G)$, then 
    \[\pi_1(X_u)=0, \
  \pi_3(X_u)=\bZ/\nu (u), \
  \pi_2(X_u) = \begin{cases}
    \bZ/2, & \text{if } u \text{ is quite even}, \\
    0,& \text{otherwise}.
  \end{cases}\]
\end{prop}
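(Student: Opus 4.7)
The plan is to extract all three groups directly from the long exact sequence~\eqref{long_es2} of the Serre fibration $H\to G\to X_u$, using the standard homotopy data~\eqref{homot_G} together with the identification of the connecting map as multiplication by the Dynkin index.

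First I would write down the relevant portion of~\eqref{long_es}, starting just above $\pi_3$, namely
\[
\pi_3(H) \to \pi_3(G) \to \pi_3(X_u) \to \pi_2(H) \to \pi_2(G) \to \pi_2(X_u) \to \pi_1(H) \to \pi_1(G) \to \pi_1(X_u) \to \pi_0(H).
\]
Substituting the values from~\eqref{homot_G}, together with connectedness of $H$, this collapses to
\[
\bZ \xrightarrow{\pi_3(\varphi_u)} \bZ \to \pi_3(X_u) \to 0, \qquad 0 \to \pi_2(X_u) \to \pi_1(H) \to 0, \qquad \pi_1(X_u) \to 0.
\]
The third gives $\pi_1(X_u)=0$ immediately. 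The second gives $\pi_2(X_u)\cong \pi_1(H)$, which is $\bZ/2$ when $H=\PSL_2(\bC)$ (i.e.\ when $u$ is quite even by definition) and $0$ when $H=\SL_2(\bC)$. The first gives $\pi_3(X_u)$ as the cokernel of $\pi_3(\varphi_u):\pi_3(H)\to\pi_3(G)$.

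The only substantive point is identifying this cokernel with $\bZ/\nu(u)$. Under the canonical generators of $\pi_3(H)=\pi_3(G)=\bZ$ coming from the generators of $\pi_3$ for a simple simply-connected compact Lie group, the induced map on $\pi_3$ of a Lie homomorphism $\fsl_2\to\fg$ is multiplication by its Dynkin index, which is the content of the cited \cite[Theorem~17.2.2]{onishchik1994topology}. Since $\nu(u)=\pi_3(\varphi_u)$ as integers, the cokernel is $\bZ/\nu(u)$, and this is where I would expect to be most careful about signs/conventions — but nothing deeper is needed.

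Putting the three pieces together yields the proposition. There is no real obstacle; the mild subtlety is simply invoking the correct normalisation that identifies the induced map on $\pi_3$ with the Dynkin index, and noting that $\nu(u)\neq 0$ for $u\neq 1$ so that $\bZ/\nu(u)$ is a finite cyclic group (or zero in degenerate cases, which we do not need to worry about here).
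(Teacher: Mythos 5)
Your argument is exactly the paper's: read off $\pi_1$, $\pi_2$, $\pi_3$ from the long exact sequence of the fibration $H\to G\to X_u$, using \eqref{homot_G}, connectedness of $H$, the case split $\pi_1(H)\in\{\bZ/2,0\}$, and the identification of $\pi_3(\varphi_u)$ with the Dynkin index. No discrepancy to flag.
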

  Note that if $u=1$, then the homotopy groups of $X_1=G$ are given in \eqref{homot_G}.
  This is somewhat consistent with Proposition~\ref{low_pi}: $\nu (1)=0$ because $d\varphi_1 =0$ and its Dynkin index is $0$.
  It remains to agree that $1$ is {\em not quite even} but it is inconsistent with the results of Section~\ref{VeryEven}.

  The maximal compact subgroup $\SL_2 (\bC)$ is $\SU_2$, geometrically a 3-sphere $S^3$.
  Thus, $\SL_2 (\bC)\simeq S^3$ and every $\varphi_u$ yields an element of $\pi_3 (G)=\bZ$.
  This is another role of the Dynkin index.

  Indeed, the generator of $\pi_3 (G)= \bZ$ is $\varphi_m$
  where $m\in G$ is the minimal unipotent (the exponent of the long root vector $e_\alpha \in \fg$).
  To observe this, we need to prove that the Dynkin index $\pi_3 (\varphi_m)$ is equal to 1.
  Note that the Dynkin index is computed by restricting the invariant form along the Lie algebra homomorphism
  $d\varphi_m: \fsl_2 \rightarrow \fg$ so that $\pi_3 (\varphi_m)(\cdot,\cdot)_{\fsl_2} = d\varphi_m^{\ast}(\cdot,\cdot)_{\fg}$.
  The forms are normalised on the long root vector by $(e,e)=1$ \cite{Dynkin1952,onishchik1994topology}.
  Since the root vector of the minimal $\fsl_2$ is the long root vector of $\fg$, we conclude that $\pi_3 (\varphi_m) =1$.

  Let $u\in G$ be a unipotent with Dynkin index $\nu (u)=d$. Then
  the following maps are homotopic:
  \begin{equation} \label{dynkin_hom}
  \varphi_u \simeq \varphi_m \circ \mu_d \, : \, \SL_2 (\bC)\xrightarrow{\mu_d : x\mapsto x^{d}} \SL_2 (\bC) \xrightarrow{\varphi_m }G\, .
  \end{equation}
Thus, the Dynkin index determines all the key maps in the long exact sequence~\eqref{long_es2}:

\begin{prop} \label{Dynkin_in}
If $u\in G$ is a unipotent element with Dynkin index $\nu (u)=d$, then for all $k\geq 2$
\[\pi_k(\varphi_u) = \pi_k(\varphi_m \circ \mu_d) = \pi_k(\varphi_m) \circ \pi_k(\mu_d) = d \cdot \pi_k(\varphi_m).\]
\end{prop}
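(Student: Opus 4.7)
The plan is to unpack the three equalities in turn, reading them from left to right. The first equality is immediate from the homotopy $\varphi_u \simeq \varphi_m \circ \mu_d$ asserted in~\eqref{dynkin_hom}, combined with the homotopy invariance of $\pi_k$; so nothing further needs to be argued there. The second equality is the functoriality of $\pi_k$ applied to a composition of pointed maps.

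The real content sits in the third equality, which asserts that $\pi_k(\mu_d) : \pi_k(\SL_2(\bC)) \to \pi_k(\SL_2(\bC))$ is multiplication by $d$ for every $k \geq 2$. I would deduce this from the $H$-space structure on $\SL_2(\bC)$ (equivalently on $\SU_2 \simeq S^3$) as follows. The map $\mu_d$ is the iterated product $x \mapsto x \cdot x \cdots x$ in the group, and hence factors as the diagonal $\SL_2(\bC) \to \SL_2(\bC)^d$ followed by the $d$-fold multiplication $\SL_2(\bC)^d \to \SL_2(\bC)$. By the standard fact (Eckmann–Hilton) that on a group-like $H$-space the $H$-space operation induces the addition on $\pi_k$, we get $\pi_k(\mu_d)[f] = [f] + \cdots + [f] = d \cdot [f]$ for any class $[f] \in \pi_k(\SL_2(\bC))$.

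Combining the three steps, $\pi_k(\varphi_u) = \pi_k(\varphi_m) \circ \pi_k(\mu_d) = \pi_k(\varphi_m) \circ (d \cdot \Id) = d \cdot \pi_k(\varphi_m)$, the last equality using that $\pi_k(\varphi_m)$ is a group homomorphism.

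There is no serious obstacle here; the one sub-step that deserves care is the $H$-space argument, and it is completely standard provided one remembers that $\SU_2$ really is a topological group and not merely a homotopy-associative $H$-space. For $k = 3$ one can also verify the claim directly, since $\mu_d : S^3 \to S^3$ is a map of degree $d$ and so induces multiplication by $d$ on $\pi_3(S^3) = \bZ$, which provides a useful sanity check consistent with the statement for $k = 3$ already implicit in the definition of the Dynkin index.
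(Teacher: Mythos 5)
Your proof is correct and is essentially the argument the paper treats as implicit: the proposition is stated without proof immediately after the homotopy $\varphi_u \simeq \varphi_m \circ \mu_d$, with the first two equalities being homotopy invariance and functoriality of $\pi_k$, and the third the standard fact that the $d$-th power map on a topological group induces multiplication by $d$ on all higher homotopy groups. Your elaboration of that last step via the $H$-space structure (and the sanity check on $\pi_3(S^3)$) is exactly the right way to fill in the detail the paper leaves unsaid.
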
 

Suppose $u,v\in G$ are non-conjugate unipotent elements, with the same Dynkin index.
Then $G\rightarrow X_u$ and $G\rightarrow X_v$
are two (potentially distinct) fibrations with the same fibre.
In Section~\ref{Example}, the reader will see many examples of these with $X_u\not\simeq X_v$.


  \section{Rational Homotopy Type}\label{RaHoT}
Given a topological space $X$, we define the rational homotopy groups to be
$\pi_n(X,\bQ) = \pi(X) \otimes \bQ$ for $n\geq 2$.
The rational homotopy category is the localisation of the category of simply-connected topological spaces,
where the maps $f$, such that all $\pi_n(f,\bQ)$, 
$n\geq 2$ are invertible, become isomorphisms.

The spaces $X_u(G)$ are simply-connected and their rational homotopy types
(i.e., isomorphism classes in the rational homotopy category)
can be explicitly described.
Let $W$ be the Weyl group of $G$. Let
\[ d_1 = 2 < d_2 \leq d_r \leq \ldots \leq d_r\]
be its fundamental degrees:
these are the degrees of the generators of the invariant algebra $\bC [\fh]^W$
where $\fh$ is a Cartan subalgebra in the Lie algebra of $G$.
The following result is due to Serre.
\begin{prop}\label{typeG} \cite[Ch.~15(f)]{felix2012rational}
  In the rational homotopy category, a simply-connected simple group $G$ is isomorphic to the
  product of odd-dimensional spheres
  $$G \stackrel{\bQ}{\simeq}  \prod_{i=1}^{r} S^{2d_i - 1} \, .$$
\end{prop}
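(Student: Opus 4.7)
My plan is to split the proof into two classical ingredients: computing the rational cohomology ring of $G$, and then upgrading the cohomological information to a rational homotopy equivalence via formality.

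First, I would pass from $G$ to its maximal compact subgroup $G_c$; since $G\simeq G_c$ (as noted in Section~\ref{intro}) the rational homotopy type is preserved, so it suffices to work with the compact connected simply-connected Lie group $G_c$. By Hopf's theorem, the rational cohomology $H^*(G_c;\bQ)$ of a compact connected Lie group is a finitely generated exterior algebra on primitive generators of odd degree:
\[
H^*(G_c;\bQ) \;\cong\; \Lambda(y_1,\dots,y_r),\qquad |y_i|\text{ odd}.
\]
To identify the degrees $|y_i|=2d_i-1$, I would invoke Borel's theorem on classifying spaces: because $G_c$ is simply connected the Weyl group $W$ acts on $H^*(BT;\bQ)$ and
\[
H^*(BG_c;\bQ)\;\cong\;H^*(BT;\bQ)^W\;\cong\;\bQ[x_1,\dots,x_r],\qquad |x_i|=2d_i,
\]
by the Chevalley--Shephard--Todd theorem applied to the reflection group $W$. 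The $y_i$ are then the transgressions of the $x_i$ in the Serre spectral sequence of $G_c\to EG_c\to BG_c$, which forces $|y_i|=2d_i-1$.

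Next I would use Sullivan's theory to convert this cohomological statement into a rational homotopy equivalence. Since $G_c$ is an H-space, it is formal: its minimal Sullivan model is isomorphic to a minimal model of its rational cohomology algebra. The algebra $\Lambda(y_1,\dots,y_r)$ with $|y_i|$ odd is already a free graded-commutative algebra with zero differential, so it \emph{is} its own minimal Sullivan model. On the other hand, each odd-dimensional sphere $S^{2d_i-1}$ has minimal Sullivan model $(\Lambda(y_i),0)$ with a single generator in degree $2d_i-1$, and the minimal model of a product is the tensor product of the minimal models. Hence
\[
\prod_{i=1}^{r} S^{2d_i-1}
\]
has the same minimal Sullivan model $(\Lambda(y_1,\dots,y_r),0)$ as $G_c$, and therefore is rationally equivalent to $G$.

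There is no serious obstacle here; the only mildly delicate point is making sure the degrees $d_i$ one gets from $H^*(BG;\bQ)$ really agree with the fundamental degrees of $W$ as defined via $\bC[\fh]^W$, but that is exactly the content of the Chevalley isomorphism combined with Borel's identification. Everything else is bookkeeping in Sullivan's formalism, leveraging the formality of H-spaces.
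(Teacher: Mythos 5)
The paper does not actually prove this proposition; it simply cites Félix--Halperin--Thomas, Ch.~15(f). Your argument is a correct, self-contained proof along the standard lines that reference uses: Hopf's theorem gives $H^*(G_c;\bQ)$ as an exterior algebra on odd generators, Borel's theorem together with Chevalley--Shephard--Todd pins the polynomial generators of $H^*(BG_c;\bQ)$ in degrees $2d_i$, transgression in the universal bundle then places the exterior generators in degrees $2d_i-1$, and formality of H-spaces (plus the fact that a free graded-commutative algebra with odd generators and zero differential is already minimal) upgrades the cohomology isomorphism to a rational homotopy equivalence with the product of odd spheres. One small cosmetic point: Borel's isomorphism $H^*(BG_c;\bQ)\cong H^*(BT;\bQ)^W$ needs $G_c$ connected, not simply connected; simple connectivity is used elsewhere (e.g.\ to guarantee the space is nilpotent so that Sullivan theory applies without fuss), but it is not what makes the Borel step work.
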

Intuitively, in the rational homotopy category, all non-trivial homomorphisms $\varphi_u$
are the first component embedding $S^3 \rightarrow \prod_{i} S^{2d_i - 1}$, while the quotient map
$G\rightarrow X_u(G)$ is the projection along the first component  $\prod_{i} S^{2d_i - 1} \rightarrow \prod_{i>1} S^{2d_i - 1}$.
We give a rigorous proof of a weaker version of this statement, sufficient for our ends.
\begin{prop} \label{typeGH}
  Let $1\neq u\in G$ be a non-trivial unipotent. In the rational homotopy category,
  $X_u(G)\stackrel{\bQ}{\simeq} \prod_{i=2}^{r} S^{2d_i - 1}$. 
\end{prop}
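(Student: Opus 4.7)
The plan is to compute an explicit Sullivan model of $X_u$ via the fibre sequence $X_u \to BH \to BG$ associated to the principal $H$-bundle $G\to G/H = X_u$, and then to exhibit a quasi-isomorphism from it to the minimal model of $\prod_{i=2}^{r}S^{2d_i-1}$. By Proposition~\ref{typeG}, $G\stackrel{\bQ}{\simeq}\prod_{i=1}^{r}S^{2d_i-1}$, so $H^*(BG;\bQ)=\bQ[u_1,\ldots,u_r]$ is polynomial with $|u_i|=2d_i$; similarly $H^*(BH;\bQ)=\bQ[v]$ with $|v|=4$, since $H$ is rationally equivalent to $S^3$. Each of these polynomial rings is its own minimal Sullivan model. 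Resolving $\bQ$ over $\bQ[u_1,\ldots,u_r]$ by the Koszul complex $\Lambda(x_1,\ldots,x_r)\otimes\bQ[u_1,\ldots,u_r]$ (with $|x_i|=2d_i-1$ and $d(x_i)=u_i$) and tensoring with $\bQ[v]$ over $\bQ[u_1,\ldots,u_r]$ yields a Sullivan model of the homotopy fibre:
\[
\mathcal{M}(X_u)=\bigl(\bQ[v]\otimes\Lambda(x_1,\ldots,x_r),\ d\bigr),\qquad d(x_i)=(B\varphi_u)^*(u_i)\in\bQ[v],
\]
up to an immaterial sign convention.

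Normalise $u_1$ and $v$ as the transgressions of generators of $H^3(G;\bQ)$ and $H^3(H;\bQ)$ respectively. Proposition~\ref{Dynkin_in} together with naturality of the transgression then gives $(B\varphi_u)^*(u_1)=\nu(u)v$ with $\nu(u)\neq 0$ since $u\neq 1$. For $i\geq 2$, $(B\varphi_u)^*(u_i)=p_i(v)\in\bQ[v]$ is a polynomial of positive cohomological degree $2d_i$, hence divisible by $v$. Because $\nu(u)v=d(x_1)$ is a coboundary, every $p_i(v)$ is a coboundary in $\mathcal{M}(X_u)$; explicitly, one chooses $y_i\in x_1\cdot\bQ[v]$ with $d(y_i)=p_i(v)$ and sets $\tilde{x}_i := x_i - y_i$, so that $d(\tilde{x}_i)=0$.

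In the new generators $v,x_1,\tilde{x}_2,\ldots,\tilde{x}_r$ the CDGA factors as a tensor product
\[
\bigl(\bQ[v]\otimes\Lambda(x_1),\ d(x_1)=\nu(u)v\bigr)\otimes\bigl(\Lambda(\tilde{x}_2,\ldots,\tilde{x}_r),\ 0\bigr).
\]
The first factor is the Koszul complex of the nonzero regular element $v$ and so has cohomology $\bQ$ concentrated in degree $0$; the projection annihilating $v$ and $x_1$ and acting as the identity on each $\tilde{x}_i$ is therefore a quasi-isomorphism from $\mathcal{M}(X_u)$ onto $(\Lambda(\tilde{x}_2,\ldots,\tilde{x}_r),0)$. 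This latter algebra is manifestly the minimal Sullivan model of $\prod_{i=2}^{r}S^{2d_i-1}$, so by the uniqueness of minimal models for simply-connected spaces of finite type we conclude $X_u\stackrel{\bQ}{\simeq}\prod_{i=2}^{r}S^{2d_i-1}$. The main delicate point is the identification $(B\varphi_u)^*(u_1)=\nu(u)v$; once this is in hand, the rest is a routine change of generators.
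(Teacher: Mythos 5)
Your proof is correct and takes essentially the same approach as the paper's: both rest on a Koszul-type Sullivan model for $X_u$ in which the differential of the first odd generator is a nonzero multiple (the Dynkin index $\nu(u)\neq 0$, equivalently $\alpha_1\neq 0$) of the even generator, forcing the latter to be a coboundary. You derive the model from the fibre sequence $X_u\to BH\to BG$ and spell out the change of generators that makes the quasi-isomorphism manifest, whereas the paper cites the model from F\'elix--Halperin--Thomas and is terser about this last step; the underlying argument is the same.
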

\begin{proof}
We will need the semifree DG-algebras
\[
\Lambda \la x_1 , \ldots , x_n \ra \coloneqq  \bQ \langle x_1 , \ldots , x_n \rangle / (x_i x_j - (-1)^{a_i a_j} x_jx_i)\, , \ 
a_i = \deg (x_i) .
\]
They are positively graded algebra with a differential $D$ of degree $-1$.
A Sullivan model for $X_u(G)$ is known:
\cite[Prop.~15.16]{felix2012rational}  
\[
\Lambda_u (G) \coloneqq
\Lambda \la x_1 , \ldots , x_r, y \ra , 
\deg (x_i) = 2d_i - 1 , \deg (y) = 2, D(y)=0, D(x_i) = \alpha_i y^{d_i-1}.
\]
The coefficients $\alpha_i$ are obtained by choosing the fundamental invariants for $G$ and $\SL_2$, defined over $\bQ$
\[
\bQ [\fh_{\bQ}]^W = \bQ [ \theta_1, \ldots , \theta_r ], \ \deg(\theta_i) = d_i, \
\bQ [\fh_{\bQ}(\SL_2)]^{W(\SL_2)} = \bQ [ \eta ],
\]
then restricting them under $d\varphi_u : \fh_{\bQ}(\SL_2) \rightarrow \fh_{\bQ}$
\[ d\varphi_u^\ast (\theta_i) = \alpha_i \eta^{d_i/2}. \]
Note that $\alpha_i=0$ for odd $d_i$ and $\alpha_1 \neq 0$. Hence, the homomorphism of the DG-algebras
\[
\Lambda_u (G) \rightarrow
( \Lambda \la x_2 , \ldots , x_r \ra , D=0) , \ 
x_1 \mapsto 0, y \mapsto 0, x_i \mapsto x_i, i\geq 2
\]
is a quasiisomorphism and the latter is the minimal Sullivan algebra for $\prod_{i=2}^{r} S^{2d_i - 1}$.
\end{proof}  

If $X_u(G){\simeq} X_v(H)$, then $X_u(G)\stackrel{\bQ}{\simeq} X_v(H)$. 
Hence, in this case, $G$ and $H$ must have the same fundamental degrees. Non-isomorphic $G$ and $H$ have the same fundamental degrees only
if they are of type $B_n$ and $C_n$. Thus, Propositions \ref{typeG} and \ref{typeGH} yield:
\begin{cor}\label{RatHomTypeCorollary} The spaces $X_u(G)$ and $X_v(H)$ have the same rational homotopy type only in the following four cases:
  \begin{enumerate}
  \item $G\cong H$ and $u=1$ and $v=1$,
  \item $G\cong H$ and $u\neq 1$ and $v \neq 1$,
  \item $G$, $H$ have types $B_n$, $C_n$ and $u=1$ and $v=1$,
    \item $G$, $H$ have types $B_n$, $C_n$ and $u\neq 1$ and $v \neq 1$.
    \end{enumerate}
\end{cor}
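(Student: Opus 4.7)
The plan is to apply Propositions~\ref{typeG} and~\ref{typeGH} to replace ``same rational homotopy type'' with a combinatorial equality of multisets of sphere dimensions, and then to invoke the classification of complex simple Lie algebras. The first step is to record that a finite product of odd-dimensional spheres $\prod_j S^{n_j}$ is determined, as an object of the rational homotopy category, by the multiset $\{n_j\}$: its rational cohomology is an exterior algebra on generators of degree $n_j$, and its rational homotopy groups form a graded $\bQ$-vector space concentrated in the degrees $n_j$. In particular, both the number of sphere factors and their individual dimensions are rational homotopy invariants of $X_u(G)$.

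Next I would split the comparison according to whether each of $u$ and $v$ is trivial. By Propositions~\ref{typeG} and~\ref{typeGH}, $X_u(G)$ is rationally a product of $r$ sphere factors when $u=1$ and of $r-1$ factors when $u\neq 1$, where $r$ is the rank of $G$. The key observation is that the algebra of Weyl-invariants for any simple complex group has a unique generator in degree $2$ (coming from the Killing form), so $d_1=2$ and $d_i>2$ for all $i\geq 2$; equivalently, an $S^3$ appears as a rational sphere factor of $X_u(G)$ if and only if $u=1$. This rules out the mixed case in which exactly one of $u,v$ is trivial.

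Finally, once both $u,v$ are trivial or both are non-trivial, matching sphere dimensions translates to equality of the multisets $\{d_i(G)\}_{i=1}^{r(G)}$ and $\{d_i(H)\}_{i=1}^{r(H)}$ (in the non-trivial case one removes one copy of the value $2$ from each side, which is equivalent to equality of the full multisets since $2$ appears with multiplicity one in each). I would then invoke the classical fact that among simple simply-connected complex algebraic groups, the only non-isomorphic pairs sharing the multiset of fundamental degrees are those of types $B_n$ and $C_n$, both with $\{2,4,\ldots,2n\}$, which yields exactly the four cases listed in the statement. The only genuine subtlety is the cross-rank ruling-out in the second paragraph, but the uniform inequality $d_2>2$ makes the presence of the $S^3$ factor a clean marker of the trivial case and requires no case-by-case analysis.
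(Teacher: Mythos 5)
Your proof is correct and follows essentially the same route as the paper: deduce from Propositions~\ref{typeG} and~\ref{typeGH} that the multisets of rational sphere dimensions must agree, and then invoke the fact that non-isomorphic simple groups share fundamental degrees only in the $B_n/C_n$ pair. You are slightly more explicit than the paper in one place --- you use the uniqueness of the degree-$2$ invariant (so $d_1=2<d_i$ for $i\geq 2$) to rule out the mixed case where exactly one of $u,v$ is trivial, a step the paper leaves implicit when it asserts that ``$G$ and $H$ must have the same fundamental degrees.''
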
  

\section{Quite Even Orbits}  \label{VeryEven}
Now we describe all quite even orbits in all simple $G$.
We assume that $u\neq 1$ throughout this section.
Note that the results of this section indicate that $u=1$ should be called quite even.
This disagrees with Proposition~\ref{low_pi}, quite even $u$ correspond to the spaces
$X_u(G)$ with non-trivial $\pi_2(X_u(G))$.

Recall that it is usual to talk about even nilpotent elements $e\in \fg$ \cite[Ch.~3.8]{collingwood1993nilpotent}.
We transfer this talk to unipotent elements: $u=\mbox{Exp}(e)$ is even if and only if $e$ is even.
Being even boils down to parity of the dimensions of constituent $\fsl_2$-modules.
Indeed, odd-dimensional non-trivial irreducible $\fsl_2$-modules are faithful representations of $\PSL_2 (\bC)$,
while even-dimensional irreducible $\fsl_2$-modules are faithful representations of $\SL_2 (\bC)$.
Thus $u$ is even if and only if
the $\fsl_2$-module $\varphi_u^* (\fg)$ (restriction of the adjoint representation)
is a direct sum of odd-dimensional irreducible $\fsl_2$-modules.

The main task in this section is to ascertain parities of irreducible constituents of $\varphi_u^* (V)$
for some key $\fg$-modules.

\begin{prop}
  Every quite even unipotent is even.
  If $G$ is of type $A_{2k}$, $G_2$, $F_4$, $E_6$ or $E_8$,
  all non-trivial even unipotent elements are quite even.
\end{prop}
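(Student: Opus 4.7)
The plan is to translate both conditions into statements about a single element $z\coloneqq \varphi_u(-I)\in G$, and then to observe that the listed types are precisely those where $|Z(G)|$ has no $2$-torsion.

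First I would unpack the definitions. Since $u\neq 1$, the homomorphism $\varphi_u:\SL_2(\bC)\to G$ is non-trivial, and as $\SL_2(\bC)$ modulo its centre $\{\pm I\}$ is simple, the kernel of $\varphi_u$ is either trivial or $\{\pm I\}$. Therefore $H=\varphi_u(\SL_2(\bC))$ equals $\PSL_2(\bC)$ exactly when $z=1$, i.e.\ $u$ is quite even iff $z=1$. For the even condition, the description recalled in the text says that $u$ is even precisely when the $\fsl_2$-module $\varphi_u^{\ast}(\fg)$ is a sum of odd-dimensional irreducibles, equivalently when $-I$ acts trivially on $\fg$ via $\mathrm{Ad}\circ \varphi_u$. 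Since the kernel of the adjoint representation is $Z(G)$, this is equivalent to $z\in Z(G)$. Note that $z^2=\varphi_u(I)=1$, so $z$ has order dividing $2$.

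The first statement is now immediate: if $u$ is quite even then $z=1\in Z(G)$, so $u$ is even.

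For the second statement, the reformulation shows that on a given $G$, the implication ``even $\Rightarrow$ quite even'' holds if and only if $Z(G)$ has no element of order exactly $2$, since the only obstruction to deducing $z=1$ from $z\in Z(G)$ with $z^2=1$ is the presence of $2$-torsion in the centre. I would then finish by listing the centres: $Z(G)$ is trivial for $G_2$, $F_4$, $E_8$; it is $\bZ/3$ for $E_6$; and it is $\bZ/(2k+1)$ for $A_{2k}$. All of these have odd order, so they contain no involution, completing the proof.

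The main obstacle is really only the correct translation of the defining condition of ``even'' into the statement $\varphi_u(-I)\in Z(G)$; once that is in place, the argument is a short check of the centres of the five listed simple groups and requires no case-by-case unipotent analysis.
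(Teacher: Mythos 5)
Your proof is correct and takes essentially the same route as the paper: both reduce the second claim to the observation that $Z(G)$ has odd order in the listed types, so it contains no involution. Your version is a bit more careful in tracking the element $z=\varphi_u(-I)$ explicitly (rather than the paper's informal ``$Z(G)$ contains $-I_2$''), but the underlying argument is identical.
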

\begin{proof}
Consider the surjection to the adjoint group $\psi : G \rightarrow G_{ad} = G/Z(G)$.
If $u$ is quite even, then
$\varphi_u (\SL_2(\bC)) = \PSL_2(\bC)$.
Hence,  $\psi(\varphi_u (\SL_2(\bC)))=\PSL_2(\bC)$, which means $u$ is even.

For the second statement, consider a non-trivial even, not quite even $u$.
Then $\varphi_u (\SL_2(\bC)) = \SL_2(\bC)$ and $\psi(\varphi_u (\SL_2(\bC)))=\PSL_2(\bC)$.
This means the centre $Z(G)$ contains 
$-I_2\in \SL_2(\bC)$.
But in all these cases, the order of $Z(G)$ is odd, so no such $u$ exists.
\end{proof}

Let $\alpha_1, \ldots , \alpha_r$ be the simple roots.
Consider the vector $[u] \coloneqq (\alpha_i (h))$ where $h$ is the semisimple element in the $\fsl_2$-triple
$e,h,f$ with $u= \mbox{Exp} (e)$.
Note that the coefficients of $[u]$ are precisely the weights of the weighted Dynkin diagram of $u$
\cite[Ch.~3.3]{collingwood1993nilpotent}.
In particular, $\alpha_i (h)\in \{0,1,2\}$ and $u$ is even if and only if all $\alpha_i (h)$ are even.

\begin{prop} \label{cirterion}
  Let $C$ be the Cartan matrix of $G$. Let $1\neq u \in G$ be a unipotent element.
Then $u$ is quite even if and only if all the coefficients of $C^{-1}[u]$ are even.
\end{prop}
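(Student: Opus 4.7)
The plan is to reduce the quite-even condition to a congruence in the coroot lattice of $G$ via the exponential map. Since $u \neq 1$, the homomorphism $\varphi_u : \SL_2(\bC) \to G$ is non-trivial, and as the only proper normal subgroup of $\SL_2(\bC)$ is $\{\pm I\}$, the image $H = \varphi_u(\SL_2(\bC))$ equals $\PSL_2(\bC)$ if and only if $\varphi_u(-I) = 1$ in $G$. So the task becomes identifying those $u$ for which $\varphi_u(-I)$ is trivial.

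Next, I would exponentiate. Let $(e,h,f)$ be the standard $\fsl_2$-triple with $u = \mathrm{Exp}(e)$, and let $h_0 = \mathrm{diag}(1,-1) \in \fsl_2$, so that $d\varphi_u(h_0) = h$. Since $-I = \exp(\pi i h_0)$ in $\SL_2(\bC)$, we get $\varphi_u(-I) = \exp(\pi i h)$ inside a maximal torus $T$ of $G$. Because $G$ is simply-connected, the kernel of the map $\fh_\bR \to T$, $x \mapsto \exp(2\pi i x)$, is the coroot lattice $Q^\vee = \bigoplus_i \bZ\, \alpha_i^\vee$. Therefore $\exp(\pi i h) = 1$ if and only if $h \in 2 Q^\vee$.

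Finally, I would translate this into the claim about $C^{-1}[u]$. Writing $h = \sum_i c_i \alpha_i^\vee$, one has $[u]_j = \alpha_j(h) = \sum_i c_i \,\alpha_j(\alpha_i^\vee)$; with the Cartan matrix convention $C_{ji} = \alpha_j(\alpha_i^\vee)$ this reads $[u] = C \vec{c}$, whence $\vec{c} = C^{-1}[u]$. The condition $h \in 2 Q^\vee$ is precisely that each $c_i$ is an even integer, which is exactly the statement of the proposition.

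The main obstacle I foresee is bookkeeping rather than conceptual: pinning down the Cartan matrix convention (taking $C$ or its transpose will not affect the evenness of the entries of $C^{-1}[u]$, but must be stated consistently), specifying the real form $\fh_\bR$ on which the coroots live, and citing cleanly the identification $\ker(\exp(2\pi i\,\cdot\,) : \fh_\bR \to T) = Q^\vee$ for simply-connected complex semisimple $G$. Once these conventions are fixed, the verification is immediate.
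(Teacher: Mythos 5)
Your proof is correct but takes a different route from the paper's. The paper identifies $C^{-1}[u]$ with the vector $(\varpi_i(h))$ of values of the fundamental weights on $h$; since the weight lattice is $\bigoplus_i\bZ\varpi_i$, these are all even precisely when every weight of every finite-dimensional $G$-module restricts to an even $\SL_2$-weight under $\varphi_u$, which (because $G$ has a faithful representation) is equivalent to $\varphi_u(-I)=1$. You instead identify $C^{-1}[u]$ with the coefficient vector of $h$ in the coroot basis and reduce the question to whether $\varphi_u(-I)=\exp(\pi i h)$ is trivial, i.e.\ whether $h\in 2Q^\vee$, invoking that for simply-connected $G$ the kernel of $x\mapsto\exp(2\pi i x)$ on $\fh$ is exactly the coroot lattice. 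The two arguments are dual to one another -- the paper works on the weight-lattice side, you work on the coroot-lattice side -- but they are genuinely distinct in flavor: yours is a direct computation of $\varphi_u(-I)$ inside a maximal torus, sidestepping representation theory, while the paper's phrasing in terms of weights ties the criterion explicitly to the parity argument used throughout the rest of Section~\ref{VeryEven} (e.g.\ the lemmas on spin weights). Either route is clean; your version makes the role of simple-connectedness of $G$ (via the coroot-lattice description of $\ker\exp$) more transparent, whereas the paper's version drops more naturally into the partition computations that follow.
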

\begin{proof}
  The inverse Cartan matrix
  expresses the dominant weights $\varpi_i$ from the simple roots $\alpha_i$.
  Thus, $C^{-1}[u]=(\varpi_i (h))$.
  This vector is even if and only if all the weights 
  of the restriction $\varphi_u^{\ast}(V)$ for any finite-dimensional $G$-module $V$ are even. This is equivalent to $\im (\varphi_u)=\PSL_2(\bC)$
  and $u$ being quite even.
\end{proof}

Proposition~\ref{cirterion}
could be used for any particular $u$. The next corollary is verified by this direct calculation for each $u$.
We follow the standard labels for the unipotent classes \cite{Carter93,collingwood1993nilpotent}.

\begin{cor}
  Let $G$ be of type $E_7$. These are its quite even classes:
  \[
  A_2,
  2A_2,
  D_4(a_1),
  D_4,
  A_4, 
  A_4+A_2,
  E_6(a_3),
  D_5,
  A_6,
  E_6(a_1),
  E_6.
  \]
These are its non-trivial even, but not quite even classes:
\[
(3A_1)^{\prime\prime},
A_2+3A_1,
(A_3+A_1)^{\prime\prime},
A_3+A_2+A_1,
(A_5)^{\prime\prime},
\]
\[
D_5(a_1)+A_1,
E_7(a_5),
E_7(a_4),E_7(a_3),E_7(a_2),E_7(a_1),E_7.
\]
\end{cor}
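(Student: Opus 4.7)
The plan is to verify both lists by direct case-by-case application of Proposition~\ref{cirterion} to every even unipotent class of $E_7$.

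First, I would enumerate the even classes. These are exactly the nilpotent orbits whose weighted Dynkin diagram has all labels in $\{0,2\}$. The diagrams for all nilpotent orbits of $E_7$ are tabulated in \cite[pp.~403--404]{collingwood1993nilpotent}; extracting those whose labels lie in $\{0,2\}$ yields twenty-three orbits, including the trivial one.

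For each of the twenty-two non-trivial even classes $u$, I would form the diagram vector $[u]\in \{0,2\}^7$, multiply by the inverse of the $E_7$ Cartan matrix (whose entries lie in $\tfrac12\bZ$ since $\det C = 2$), and inspect the parities of the coordinates of $C^{-1}[u]=(\varpi_i(h))$. By Proposition~\ref{cirterion}, $u$ is quite even precisely when every coordinate is an even integer. Partitioning the twenty-two classes accordingly produces the two lists in the statement; the total count $10+12+1=23$ serves as a sanity check against the known number of even orbits in $E_7$.

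Note that the previous proposition already rules out any even-but-not-quite-even classes whenever $|Z(G)|$ is odd; for $E_7$ we have $|Z(G)|=2$, so such classes genuinely can occur and a finite case-check is unavoidable. The only obstacle is bookkeeping: twenty-two matrix-vector products followed by parity tests, each individually trivial. As an additional cross-check, since the image $\varphi_u(-I_2)\in Z(G)$ depends only on the restriction of $d\varphi_u$ to the Cartan, one can alternatively verify each case by evaluating the non-trivial central character $Z(E_7)\rightarrow \{\pm 1\}$ on $\exp(\pi i h)$, which must agree with the parity test on $C^{-1}[u]$.
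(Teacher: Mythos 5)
Your proposal is correct and is precisely the proof the paper intends: the sentence immediately preceding the corollary says it is ``verified by this direct calculation for each $u$,'' i.e.\ the parity test on $C^{-1}[u]$ from Proposition~\ref{cirterion} applied to each even class of $E_7$, which is exactly what you describe. Your sanity-check count (and the equivalent central-character check on $\exp(\pi i h)$) is a sensible safeguard; it also quietly exposes that the paper's second list repeats $(A_3+A_1)^{\prime\prime}$, so the printed list has a typographical duplicate that a careful case-by-case run would catch.
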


We finish by describing quite even (and also even) elements in the classical types in terms of partitions.
Suppose $G$ is one of the groups $\SL_n (\bC)$, $\Spi_n (\bC)$ or $\Sp_n (\bC)$. Restrict the natural representation
of $G$ on $V=\bC^n$ to $\SL_2(\bC)$, along the map $\varphi_u$ and record the dimensions of irreducible constituents
$(d_1\geq d_2 \geq \ldots)$. This is the partition $p(u) \in \mP(n)$, associated to $u$ \cite[5.1.7]{collingwood1993nilpotent}.

Recall the natural restrictions on the parities of $d_i$ in $p(u)$. 
They come from the fact that odd-dimensional representations of $\SL_2 (\bC)$ are orthogonal,
while even-dimensional representations are symplectic.
In type $C_r$, $V$ carries a symplectic form, whose restrictions to odd-dimensional constituents of $\phi_u^{\ast} (V)$ must be zero.
Thus, such constituents must come in dual isotropic pairs of spaces. Thus, all odd $d_i$ must appear an even number of times.

Similarly, in types $B_r$ and $D_r$, $V$ carries an orthogonal form, whose restrictions to even-dimensional constituents of $\phi_u^{\ast} (V)$ must be zero.
Thus, such constituents must come in dual isotropic pairs of spaces and all even $d_i$ must appear an even number of times.

\begin{theorem}\label{QuiteEvenClassical}
  Consider $G$ of classical type and a unipotent element $u\in G$, $u\neq 1$ with the corresponding partition $p(u)=(d_i)\in \mP(n)$.
  \begin{enumerate}
  \item $u$ is even if and only if all $d_i$ have the same parity (all even or all odd).
  \item If $G$ is of type $A_r$ or $C_r$, then $u$ is quite even if and only if all $d_i$ are odd. 
  \item If $G$ is of type $B_r$ or $D_r$, then $u$ is quite even if and only if the product $\prod_i d_i \equiv \pm 1 \mod 8$.
  \end{enumerate}  
\end{theorem}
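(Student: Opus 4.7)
The plan is to work in the natural representation $V$ of each classical group $G$, where the partition $p(u)=(d_j)$ specifies the $\fsl_2$-decomposition directly: a Jordan block of size $d_j$ contributes $h$-eigenvalues $d_j-1, d_j-3, \ldots, -(d_j-1)$ on $V$, all of the parity of $d_j-1$. After Weyl-group conjugation I may take $h$ dominant, so $h=\mathrm{diag}(a_1,\ldots,a_n)$ (with $\sum a_i=0$) in type $A_{n-1}$, and $h=\mathrm{diag}(a_1,\ldots,a_n,\ast,-a_n,\ldots,-a_1)$ with $a_1\geq\cdots\geq a_n\geq 0$ in the remaining types, where $\ast$ is an extra $0$ in type $B_n$. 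Pairing with the standard simple roots gives $\alpha_i(h)=a_i-a_{i+1}$ for $i<n$, and $\alpha_n(h)=a_n$, $2a_n$, or $a_{n-1}+a_n$ in types $B_n$, $C_n$, $D_n$ respectively. For part~(1), ``$u$ even'' (all $\alpha_i(h)$ even) therefore reads: all $a_i$ have a common parity, with the extra constraint ``$a_n$ even'' in type $B$. Since an even-sized block of the partition contributes only odd values to $(a_i)$ while an odd-sized block contributes only even values (and zeros), and since $\dim V = 2n+1$ forces at least one odd block in type $B$, this condition is equivalent to all $d_j$ sharing the same parity.

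For parts~(2) and~(3), I rephrase Proposition~\ref{cirterion} as ``$u$ is quite even iff $h\in 2Q^\vee$'', where $Q^\vee$ is the coroot lattice. Standard computations give $Q^\vee=\{(a_i)\in\bZ^n:\sum a_i=0\}$ in $A_{n-1}$, $Q^\vee=\bZ^n$ in $C_n$, and $Q^\vee=\{(a_i)\in\bZ^n:\sum a_i\in 2\bZ\}$ in both $B_n$ and $D_n$. In types $A$ and $C$ the criterion becomes ``all $a_i$ even'', which, since even-sized blocks produce odd eigenvalues, is equivalent to all $d_j$ being odd, proving part~(2). In types $B$ and $D$ one has $2Q^\vee=\{(a_i)\in 2\bZ^n:\sum a_i\equiv 0\pmod 4\}$, so the first clause again forces all $d_j$ odd, leaving the residual condition $\sum a_i\equiv 0\pmod 4$. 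A block-by-block computation of the positive eigenvalues gives the clean formula $\sum a_i=\sum_j(d_j^2-1)/4$.

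For odd $d_j$, $d_j^2\equiv 1\pmod 8$, so $(d_j^2-1)/8\in\bZ$; a one-line case check modulo $8$ shows this integer is odd precisely when $d_j\equiv\pm 3\pmod 8$ and even when $d_j\equiv\pm 1\pmod 8$. Hence $\sum_j(d_j^2-1)/4\equiv 0\pmod 4$ iff the number of $d_j$ satisfying $d_j\equiv\pm 3\pmod 8$ is even. The group $(\bZ/8)^\times=\{\pm 1,\pm 3\}$ is Klein four with $\{\pm 1\}$ as an index-$2$ subgroup, so a product of odd integers lies in $\{\pm 1\}\pmod 8$ iff the number of its factors lying in $\{\pm 3\}\pmod 8$ is even; the mod-$4$ condition therefore translates to $\prod_j d_j\equiv\pm 1\pmod 8$, proving part~(3). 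The main obstacle is the bookkeeping rather than any deep argument: matching standard Cartan coordinates of $h$ to the $V$-eigenvalue multiset when some eigenvalues are zero, and keeping track of the parity constraints that the symmetric or symplectic form on $V$ imposes on the partition (even parts have even multiplicity in types $B$ and $D$; odd parts have even multiplicity in type $C$; the number of odd parts is odd in type $B$). The modular computation itself is short.
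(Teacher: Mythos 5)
Your proof is correct, and it takes a genuinely different route from the paper's. For Part~(3) the paper constructs the spin and semispin representations explicitly via the Clifford algebra and enumerates the weights of $h$ on $\Lambda^\bullet W$ (Lemmas~\ref{typeB} and~\ref{typeD}); the criterion $\prod d_i \equiv \pm 1 \pmod 8$ emerges from that weight enumeration. You instead recast Proposition~\ref{cirterion} as the lattice condition ``$u$ is quite even iff $h\in 2Q^\vee$'' and read off the Cartan coordinates of $h$ directly from the partition, reducing the question to the single congruence $\sum_j (d_j^2-1)/4\equiv 0\pmod 4$. This avoids the Clifford-algebra machinery entirely and makes types $B$ and $D$ genuinely uniform, since only the identification of $Q^\vee$ enters. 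The paper's spinor construction, on the other hand, is reused to settle Corollary~\ref{remaining_groups}(3) for the semispin groups; your approach could recover that statement by replacing $Q^\vee$ with the cocharacter lattice of $\SSpi_{4k}$, but the work is not already done. Two details worth spelling out in a final write-up: the identity $h\in Q^\vee$ (rather than just the coweight lattice) uses that $G$ is simply-connected, so $X_*(T)=Q^\vee$, and then $\varphi_u(-I_2)=1$ iff the cocharacter $h$ is divisible by $2$ in $X_*(T)$, which is exactly $h\in 2Q^\vee$. And in type $D_r$ the Weyl chamber only pins $a_r$ down up to sign; this is harmless because once all $d_j$ are odd the eigenvalue $0$ appears (the number of odd parts is even and positive, so at least twice), forcing $a_r=0$, and in any case $2Q^\vee(D_r)$ is stable under all sign changes so membership is insensitive to the ambiguity. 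Your Part~(1) argument via the weighted Dynkin diagram $(\alpha_i(h))$ is also a different, more computational route than the paper's tensor-product argument on $\fg_{ad}$; both are valid.
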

\begin{proof}
  The central element $-I_2\in \SL_2 (\bC)$ acts as  $(-1)^{d+1}$ on the $d$-dimensional irreducible $\fsl_2$-module.
  Thus, all the questions of $u$ being (quite) even are reduced
  to a certain $\fsl_2$-module having only odd-dimensional constituents.
  For instance, Statement~(2) is immediate since the standard representation $V=\bC^n$ is a faithful $G$-module in the types $A_r$ or $C_r$.

  In all the cases the adjoint $G$-module is a constituent of $V\otimes V^\ast$. This proves the ``if'' part of Statement~(1):
  $-I_{2}$ acts as $1$ on  $V\otimes V^\ast$.

  The ``only if'' part of Statement~(1) is clear too.
  The  adjoint $G$-module can be recovered from $V$ as
  $$
  \fg_{ad} = \frac{V\otimes V^\ast}{ \bC I_V} \ (\mbox{type } A), \
  \fg_{ad} = \Lambda^2 V \ (\mbox{types } B, D), \
  \fg_{ad} = S^2 V  \ (\mbox{type } C). 
  $$
  Suppose $u$ is even so that $-I_2$ acts on $\fg_{ad}$ as $1$.
  Consider two irreducible constituents $U_1, U_2$ of dimensions $d$ and $d^\prime$ of $\varphi_u^{\ast}V$.
  If $dd^\prime>1$, 
  some constituents $U_1\otimes U_2 \cong U_1\otimes U_2^\ast$ inevitably appear in $\fg_{ad}$.
  But there $-I_2$ acts as $(-1)^{d+d'+2}$, proving that $d$ and $d'$ have the same parity.

  In the extreme case of $d=d'=1$, we can take any other constituent $U_3$ of degree $\tilde{d}$.
  The argument above shows that $\tilde{d}$ is odd so that all $d_i$ are odd.

  Statement~(3) requires the image of $-I_2$ in a faithful representation of $G$.
  In type $B_r$, the image in the the spinor representation is computed  in Lemma~\ref{typeB}.
  In type $D_r$, the image in the sum of the semispinor representations is computed in Lemma~\ref{typeD}.
  These two lemmas complete the proof.
\end{proof}

In types $A_r$ and $D_r$, there are intermediate groups   
$\psi : G \rightarrow G/A$, $A\leq Z(G)$, 
which are neither simply-connected, nor adjoint.
For completeness, let us quickly address the conditions when the image of $\SL_2 (\bC)$ in these groups is $\PSL_2 (\bC)$.
Clearly, $u$ being  quite even is sufficient and $u$ being even is necessary. This leaves the following cases to address.
\begin{cor} \label{remaining_groups}
The following statements hold for an even, not quite even unipotent element $u\in G$, $u\neq 1$ with the corresponding partition $p(u)=(d_i)\in \mP(n)$.
\begin{enumerate}
  \item Let $G$ be of type $A_{2k-1}$, $A= \langle e^{2\pi i /s} I_{2k} \rangle$ where $s$ divides $2k$.
    Then $\psi (\varphi_u (\SL_2 (\bC))) = \PSL_2 (\bC)$
    if and only if
    $m=2k/s$ is even.
  \item Let $G$ be of type $D_{r}$, $G/A= \SO_{2r} (\bC)$.
    Then $\psi (\varphi_u (\SL_2 (\bC))) = \PSL_2 (\bC)$
    if and only if
      all $d_i$ are odd. 
  \item Let $G$ be of type $D_{2k}$, $G/A= \SSpi_{4k} (\bC)$, one of the semispin groups.
    Then $\psi (\varphi_u (\SL_2 (\bC))) = \PSL_2 (\bC)$
    if and only if
     the product $\prod_i d_i \equiv \pm 1 \mod 8$.
  \end{enumerate}
\end{cor}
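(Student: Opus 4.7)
My plan is to reduce every part to identifying the central element $\varphi_u(-I_2) \in Z(G)$ and testing its membership in $A$. Since $u$ is even, $\varphi_u(-I_2)$ acts trivially on the adjoint representation and hence lies in $Z(G)$; since $u$ is not quite even, it is nontrivial there. Because $\varphi_u$ is then injective, the equality $\psi(\varphi_u(\SL_2(\bC))) = \PSL_2(\bC)$ is equivalent to $-I_2 \in \ker(\psi \circ \varphi_u)$, equivalently to $\varphi_u(-I_2) \in A$. Each part thus becomes a membership test in $Z(G)$.

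For part (1), Theorem~\ref{QuiteEvenClassical}(2) forces every $d_i$ to be even, and $-I_2$ acts as $(-1)^{d_i - 1} = -1$ on each irreducible $\fsl_2$-summand of $V = \bC^{2k}$, giving $\varphi_u(-I_2) = -I_{2k}$. Identifying $Z(\SL_{2k}(\bC)) = \mu_{2k}$ with $\bZ/2k\bZ$ so that $-I_{2k}$ corresponds to $k$, the subgroup $A$ becomes the cyclic subgroup of index $m = 2k/s$, and membership of $k$ in that subgroup is elementary arithmetic in $\bZ/2k\bZ$ that yields the stated parity condition on $m$. For part (2), the kernel of $\Spi_{2r}(\bC) \to \SO_{2r}(\bC)$ consists precisely of the central elements acting trivially on $V = \bC^{2r}$; since the scalar action of $\varphi_u(-I_2)$ on the $d_i$-dimensional constituent is $(-1)^{d_i - 1}$, the element $\varphi_u(-I_2)$ lies in $A$ exactly when every $d_i$ is odd.

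For part (3), $Z(\Spi_{4k}(\bC)) \cong (\bZ/2)^2$ has three nontrivial elements $z_V, z_+, z_-$, distinguished by their signs on the triple $(V, S^+, S^-)$: $z_V$ generates the vector kernel and $z_\pm$ generate the two semispin kernels. The hypothesis splits the partition into two subcases. If all $d_i$ are odd, $\varphi_u(-I_2)$ acts trivially on $V$, so $\varphi_u(-I_2) = z_V$, which does not lie in any semispin kernel and in particular not in $A$. If all $d_i$ are even, the action on $V$ is $-I$, so $\varphi_u(-I_2) \in \{z_+, z_-\}$; I would then use the scalar action on the half-spin representation supplied by Lemma~\ref{typeD} to decide which one, and match it against the half-spin representation whose kernel defines $A$. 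The resulting criterion assembles into the congruence $\prod_i d_i \equiv \pm 1 \pmod 8$, the same invariant that governs quite-evenness in $\Spi_{4k}(\bC)$ in Theorem~\ref{QuiteEvenClassical}(3), reflecting the fact that $A$ is by construction the kernel of a half-spin representation.

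The main obstacle is part (3): one must track $\varphi_u(-I_2)$ as an element of $(\bZ/2)^2$, keeping the two semispin kernels carefully distinct, and handle the subtlety that very-even partitions correspond to two conjugacy classes in $\Spi_{4k}(\bC)$ exchanged by the outer automorphism of $D_{2k}$. Once the combinatorial formula of Lemma~\ref{typeD} for the scalar action on a half-spin representation is at hand, the remaining work is the careful packaging of the two subcases into the single congruence $\prod d_i \equiv \pm 1 \pmod 8$.
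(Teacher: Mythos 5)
Your reduction to a single membership test---does $\varphi_u(-I_2)$ lie in $A$?---is the right move and is in spirit the paper's own proof, which runs the same test by picking a faithful representation of $G/A$ and inspecting the scalar by which $\varphi_u(-I_2)$ acts. Part~(2) goes through correctly. In Part~(1), however, the step you delegate to ``elementary arithmetic'' is exactly where the claim lives, and you never carry it out. With $A=\langle e^{2\pi i/s}I_{2k}\rangle$ cyclic of order $s$ inside $Z(\SL_{2k}(\bC))$, the unique central element of order two, $-I_{2k}$, lies in $A$ precisely when $2\mid s$; since $2k=ms$, this is \emph{not} equivalent to $m$ being even (take $s=2k$, $m=1$: then $G/A=\PGL_{2k}(\bC)$, where every even $u$ has image $\PSL_2(\bC)$, yet $m$ is odd). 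You must do the arithmetic rather than assert its outcome; as written, your argument does not yield the stated parity condition.

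Part~(3) has the more serious gap, at exactly the place you flag and decline to resolve. The ``all $d_i$ odd'' sub-case is fine: $\varphi_u(-I_2)=z_V$ lies in neither semispin kernel. But in the ``all $d_i$ even'' sub-case, the two $\Spi_{4k}(\bC)$-classes sharing a very even partition send $-I_2$ to the \emph{two distinct} central elements other than $z_V$: they are exchanged by the outer automorphism of $D_{2k}$, which simultaneously swaps the two semispin kernels (cf.\ the triality discussion for $D_4$ in Section~\ref{Example2}). For a fixed $A$, whether $\varphi_u(-I_2)\in A$ then depends on which of the two classes $u$ lies in, while $\prod_i d_i$ sees only the partition; no congruence on $\prod_i d_i$ alone can decide it. This is not ``careful packaging''---it is an obstruction to the approach. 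The paper defers Statement~(3) to Lemma~\ref{typeD}, whose hypothesis requires all $d_i$ odd, so it does not close this sub-case either.
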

\begin{proof}
  Let $\varpi_1, \ldots, \varpi_r$ be the fundamental weights, so that $V=V(\varpi_1)$. Under our assumption on $u$, $-I_2$ acts as $-1$ on $V$.

  A faithful representation of $\SL_{2k}(\bC)/A$ is $V(\varpi_m) = \Lambda^m (V)$. 
  Statement~(1) immediately follows because $-I_2$ acts as $(-1)^m$ on $\Lambda^m (V)$. 

        Statement~(2)
        holds because the $d_i$ are dimensions of the irreducible constituents of $\phi_u^{\ast}(V)$, which is a faithful representation of $\SO_{2r} (\bC)$.

              Statement~(3) is essentially Lemma~\ref{typeD} below. 
\end{proof}

To finish the proof in type $B_r$, we need to compute the action of $-I_2\in\SL_2(\bC)$ on the spin representation.
We briefly recall its construction, following \cite[Section 20.1]{fulton2013representation}. 
Let $V=\bC^n$ where $n=2r+1$.
Following \cite[Section 18.1]{fulton2013representation}, we fix a basis $e_1,\ldots,e_{n}$ of $V$ and define a bilinear form $Q$ via
\[Q(e_i,e_{r+i}) = Q(e_{r+i},e_{i}) =1,\hspace{3ex} Q(e_i,e_j) = 0 \text{ otherwise}.\]
The corresponding Clifford algebra (with respect to $Q$) 
\[C(Q) = C(V,Q) := T^{\bullet}(V)/\left(a\otimes b + b\otimes a - Q(a,b)\right)\]
admits a $\mathbb{Z}/2\mathbb{Z}$ grading $C(Q) = C(Q)_{0} \oplus C(Q)_{1}$. We have an embedding of $\mathfrak{so}(V)$ into $C(Q)_{0}$ via:
\begin{gather*}
    \mathfrak{so}(V) \cong \Lambda^2(V) \longrightarrow C(V)^{0} \\
    2(E_{i,j} - E_{r+j,r+i}) \longmapsto e_i \wedge e_{r+j} \longmapsto  \left(e_ie_{r+j} - \delta_{i,j}\right) 
\end{gather*}
where $E_{i.j}$ denotes the matrix with a $1$ in position $(i,j)$ and $0$ elsewhere. Note that
we write $e_ie_{r+j} \in C(Q)_{0}$ instead of $e_i \otimes e_{r+j}+(a\otimes b + \ldots)$.

Now write $V = W \oplus W' \oplus U$
where $W = \langle e_1,\ldots,e_r \rangle$, $W' = \langle e_{r+1},\ldots,e_{2r}\rangle $ and $U = \langle e_{2r+1} \rangle$. Observe that $W$ and $W'$ are isotropic subspaces with respect to $Q$. Moreover, we have an identification of $W'$ and $W^*$ via $e_{r+i} \mapsto e_{i}^* = \frac{1}{2}Q(e_{r+i},-)$. Now, one can show that we have an isomorphism \cite[Eq 20.18]{fulton2013representation}
\begin{gather*}
C(Q)_{0} \cong \text{End}(\Lambda^\bullet W) \\
e_i \longmapsto L_{e_i}: (w_1 \wedge \ldots \wedge w_s \longmapsto e_i \wedge w_1 \wedge \ldots \wedge w_s) \\
e_{r+i} \longmapsto D_{e_{i}}^*: \left(w_1 \wedge \ldots \wedge w_s \longmapsto \sum (-1)^{i-1}2Q(w_i,e_i)w_1 \wedge \ldots \wedge \widehat{w_i} \wedge \ldots \wedge w_s\right) \\
e_{2r+1} \longmapsto \big(w_1 \wedge \ldots \wedge w_r \longmapsto (-w_1) \wedge \ldots \wedge (- w_r)\big).
\end{gather*}  
This, alongside our embedding $\mathfrak{so}(V) \subset C(Q)_{0}$ gives a representation of $\mathfrak{so}(V)$ on $\Lambda^\bullet W$, known as the spin representation.
We are ready to calculate
the image of $h \in \mathfrak{sl}(2)$ (from the corresponding $\fsl_2$-triple)
inside $\text{End}(\Lambda^\bullet W)$.


\begin{lem} \label{typeB}
  Let $p(u)=(d_i)\in \mP(n)$ be a partition of $n=2r+1$, with all $d_i$ odd and
  the corresponding map $d\varphi_u: \mathfrak{sl}(2) \longrightarrow \mathfrak{so}(n)$.
  Then the weights appearing in $\varphi_u^{\ast} \Lambda^\bullet W$
  are all even or all odd. Moreover, the weights are all even if and only if $\displaystyle\,\prod_{i=1}^k d_i \equiv \pm 1 \mod 8.$  
\end{lem}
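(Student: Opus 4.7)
The plan is to diagonalize the action of $d\varphi_u(h)$ on $V$ and invoke the classical weight formula for the spin representation. First, I would decompose $V=\bigoplus_{i=1}^{k}V_i$ as an orthogonal direct sum of irreducible $\fsl_2$-submodules with $\dim V_i=d_i$; such a decomposition exists because each odd-dimensional irreducible $\fsl_2$-module carries a unique (up to scalar) invariant symmetric bilinear form. The eigenvalues of $h$ on $V_i$ are $-(d_i-1),-(d_i-3),\dots,(d_i-1)$, all even since $d_i$ is odd. Assembling these, the spectrum of $d\varphi_u(h)$ on $V$ is symmetric about $0$ with exactly $k$ zero eigenvalues.

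Next I would conjugate $d\varphi_u(h)$ into the standard Cartan subalgebra $\mathfrak{t}\subset\mathfrak{so}(V)$ whose characters on $V$ are $\pm e_1,\dots,\pm e_r,0$. In these coordinates, $d\varphi_u(h) = (t_1,\dots,t_r)$ with each $t_j \geq 0$ even, the nonzero $t_j$'s collectively realizing the multiset $\{2,4,\dots,d_i-1\}$ drawn from each $V_i$. The spin representation of $\mathfrak{so}(V)$ has weights $\tfrac12(\epsilon_1 e_1+\cdots+\epsilon_r e_r)$ for $\epsilon_j\in\{\pm 1\}$, each with multiplicity one. Setting $m_j:=t_j/2\in\bZ_{\geq 0}$, the eigenvalues of $d\varphi_u(h)$ on $\Lambda^\bullet W$ are exactly the $2^r$ integers $\sum_{j=1}^r\epsilon_j m_j$, all of which share the common parity $\sum_j m_j \bmod 2$ since $\epsilon_j m_j \equiv m_j \pmod 2$. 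This proves the first assertion.

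For the second, I would compute
\[
\sum_{j=1}^r m_j \;=\; \sum_{i=1}^{k}\sum_{\ell=1}^{(d_i-1)/2}\ell \;=\; \sum_{i}\frac{d_i^2-1}{8}.
\]
For odd $d$, $d^2\equiv 1\pmod{16}$ when $d\equiv\pm 1\pmod 8$ and $d^2\equiv 9\pmod{16}$ when $d\equiv\pm 3\pmod 8$, so $(d^2-1)/8$ is even or odd accordingly. Equivalently, $\chi(d):=(d^2-1)/8\bmod 2$ is a group homomorphism $(\bZ/8\bZ)^{\times}\to\bZ/2\bZ$ with kernel $\{\pm 1\}$, so $\sum_j m_j$ is even iff $\chi\!\bigl(\prod_i d_i\bigr)=0$ iff $\prod_i d_i\equiv\pm 1\pmod 8$, as desired.

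The step requiring the most care is justifying that the eigenvalues of $d\varphi_u(h)$ on the Clifford-algebra model $\Lambda^\bullet W$ of the spin representation are indeed computed by the standard Cartan formula. This is legitimate because the spin representation is a well-defined isomorphism class of $\mathfrak{so}(V)$-modules (independent of the Lagrangian choice) and eigenvalues are invariant under $\mathrm{SO}(V)$-conjugation; verifying the homomorphism property of $\chi$ is then an immediate check (e.g.\ $\chi(3\cdot 5)=\chi(7)=0$).
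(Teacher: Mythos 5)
Your proof is correct, and it computes exactly the same quantity as the paper's proof; the difference is one of presentation. The paper works entirely inside the explicit Clifford-algebra model, tracing $h$ through the chain $\mathfrak{so}(V) \cong \Lambda^2 V \to C(Q)_0 \to \mathrm{End}(\Lambda^\bullet W)$ and identifying each $e_I$ directly as a weight vector of eigenvalue $\pm c_1 \pm (c_1-1)\pm\cdots\pm 1 \pm\cdots$; you instead conjugate $d\varphi_u(h)$ into the standard Cartan and invoke the classical weight multiset $\tfrac12(\pm e_1 \pm\cdots\pm e_r)$ for the spin representation, with the (correct and necessary) remark that eigenvalue multisets are conjugation-invariant and depend only on the isomorphism class of the module. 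Both routes land on the same parity $\sum_i c_i(c_i+1)/2 = \sum_i (d_i^2-1)/8 \bmod 2$; you then package the final reduction as the unique character $\chi : (\bZ/8\bZ)^\times \to \bZ/2\bZ$ with kernel $\{\pm 1\}$, which is a tidier way to phrase the paper's explicit chain of equivalences and makes the appearance of $\pm 1 \bmod 8$ conceptually transparent. The trade-off is that your argument leans on the classical weight formula for the spin representation as a black box, whereas the paper's Clifford-algebra computation is self-contained given the setup it has already introduced.
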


\begin{proof}
  Let $h = \begin{pmatrix} 1 & 0 \\ 0 & -1 \end{pmatrix}\in\fsl_2$.
  Write our partition $[d_1,\dots d_k]$ as $[2c_1+1,\dots,2c_k+1]$.
  Consider the diagonal matrix
  $$
  D = \text{diag}(2c_1, 2c_1 -2 , \ldots,4,2,2c_2, \ldots,2c_k,\ldots 2,\underbrace{0,\ldots,0}_{\frac{k-1}{2} \text{ zeroes}}) \in
  M_{r\times r}(\bC).
  $$
  Then $\varphi_u (h) = \text{diag}(D,-D,0)$ \cite[5.2.4]{collingwood1993nilpotent}.
  Tracing through the maps 
\[\mathfrak{so}(V) \cong \Lambda^2 V \longrightarrow C(Q)_{0} \longrightarrow \text{End}(\Lambda^\bullet W)\]
associates to $h$ the endomorphism 
\[ \sum_{i=1}^{k} \sum_{j=1}^{c_i} (c_i +1-j)\left(2L_{e_{c_1+\dots+c_{i-1}+j}} \circ D_{e_{c_1+\dots+c_{i-1}+j}^*} -\textup{Id}\right) \in \text{End}(\Lambda^\bullet W).\]
Now we have for $e_I := e_{i_1} \wedge \dots \wedge e_{i_k} \in \Lambda^\bullet W$: \cite[within proof of 20.15]{fulton2013representation}
\[\left(2L_{e_i} \circ D_{e_{i}^*} - Id\right)(e_I) = \begin{cases} e_I & i \in I \\ -e_I & i \not\in I \end{cases}.\]
It follows that each $e_I$, $I \in 2^{\{1,\dots,n\}}$ (the power set of $\{1,\dots,n\}$) is a weight vector.
The weights of $h$ correspond to the $2^n$ ways to assign signs to the sum 
\[\pm c_1 \pm (c_1 - 1) \pm \dots \pm 1 \pm c_2 \pm \dots \pm 1 \pm \dots \pm c_k \pm \dots \pm 1 \underbrace{\pm 0 \pm \dots \pm 0}_{\frac{k-1}{2} \text{ zeroes}}.\]
In particular, they are all even or all odd: changing a single sign results in adding or subtracting of twice a single summand.

To determine whether it is even or odd, we can choose all signs to be even. Then
\begin{align*}
\text{All weights are even} &\iff \sum_{i=1}^k \frac{c_i(c_i+1)}{2} \equiv 0 \mod 2 \\
&\iff \left|\left\{c_i : \frac{c_i(c_i+1)}{2} \equiv 1 \mod 2 \right\}\right| \text{ is odd}\\
&\iff \left|\left\{c_i : c_i \equiv 1,2 \mod 4 \right\}\right| \text{ is odd} \\
&\iff \left|\left\{d_i : d_i \equiv 3,5 \mod 8 \right\}\right| \text{ is odd} \\
&\iff \prod_{i=1}^k d_i = \pm 1 \mod 8 
\end{align*}
\end{proof}

Let us finish the proof in type $D_r$.
Similarly, to type $B_r$
we need to compute 
the action of $h$ on the half-spin representation
$\Lambda^{2\bullet} W$ or 
$\Lambda^{2\bullet +1}W$ where $W=\bC^r$ is a lagrangian subspace in $V=\bC^{2r}$.

\begin{lem} \label{typeD}
  Let $p(u)=(d_i)\in \mP(n)$ be a partition of $n=2r$,
  with all $d_i$ odd and
  corresponding map
  $d\varphi_u: \mathfrak{sl}(2) \longrightarrow \mathfrak{so}(n)$.
  Then the weights appearing in $\varphi_u^{\ast} \Lambda^{2\bullet} W$ (or in $\varphi_u^{\ast} \Lambda^{2\bullet+1} W$)
  are all even or all odd. Moreover, the weights are all even if and only if $\displaystyle\,\prod_{i=1}^k d_i \equiv \pm 1 \mod 8.$
\end{lem}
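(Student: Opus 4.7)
The plan is to follow the same Clifford-algebra strategy used for type $B_r$ in Lemma~\ref{typeB}, adjusting for the fact that $V=\bC^{2r}$ has even dimension and that the full spin module $\Lambda^\bullet W$ splits into the two half-spin summands $\Lambda^{2\bullet} W$ and $\Lambda^{2\bullet +1} W$. The first step is to set up the analogous construction: write $V = W \oplus W'$ as the sum of two maximal isotropic subspaces of dimension $r$ (with no central $e_{2r+1}$ this time) and use the identification $C(Q)_0 \cong \mathrm{End}(\Lambda^\bullet W)$ to embed $\mathfrak{so}(V)$ acting on $\Lambda^\bullet W$.

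Next I would compute $\varphi_u(h)$ explicitly. Since every $d_i$ is odd and $\sum d_i = 2r$ is even, the number of parts $k$ must be even, hence $k \geq 2$. Writing $d_i = 2c_i+1$ as before, one finds $\varphi_u(h) = \mathrm{diag}(D,-D)$ with
\[D = \mathrm{diag}\bigl(2c_1, 2c_1 - 2, \ldots , 2, \ 2c_2, \ldots, 2, \ \ldots, \ 2c_k, \ldots, 2, \ \underbrace{0, \ldots, 0}_{k/2 \text{ zeroes}} \bigr) \in M_{r\times r}(\bC).\]
Tracing the embedding $\mathfrak{so}(V) \hookrightarrow C(Q)_0 \cong \mathrm{End}(\Lambda^\bullet W)$ exactly as in the proof of Lemma~\ref{typeB}, each $e_I$ for $I \subseteq \{1,\ldots,r\}$ becomes a weight vector for $\varphi_u(h)$, and the weight is the signed sum of the diagonal entries of $D$ with $+$ for indices in $I$ and $-$ otherwise.

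The new ingredient is handling the two half-spin representations. Changing a single sign alters the weight by $\pm 2\lambda_j$, which is even, so all weights appearing in $\Lambda^\bullet W$ share a common parity. Since $k/2 \geq 1$, at least one diagonal entry of $D$ is zero; toggling the corresponding index in and out of $I$ preserves the weight while flipping the parity of $|I|$. It follows that every weight of $\Lambda^{2\bullet} W$ is also a weight of $\Lambda^{2\bullet+1} W$, and vice versa, so both halves have the same parity statement. Finally, evaluating at $I = \{1,\ldots,r\}$ (all signs positive) gives the representative weight $\sum_{i=1}^k \tfrac{c_i(c_i+1)}{2}$, and the modular arithmetic carrying this through to the condition $\prod d_i \equiv \pm 1 \bmod 8$ is identical to the chain of equivalences at the end of the proof of Lemma~\ref{typeB}.

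The main (minor) obstacle is the appearance of the two half-spin modules where type $B_r$ had only one spin module. The key observation that dissolves it is that $k \geq 2$ forces $D$ to contain at least one zero, whose index may be toggled freely; this simultaneously shows that both halves carry the same weight multiset mod $2$ and allows us to reduce the problem to a single parity computation identical to the one already carried out in type $B_r$.
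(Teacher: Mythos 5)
Your proof is correct and follows essentially the same route as the paper: set up the Clifford-algebra splitting $C(Q)_0 \cong \mathrm{End}(\Lambda^{2\bullet}W)\oplus\mathrm{End}(\Lambda^{2\bullet+1}W)$, compute $\varphi_u(h)=\mathrm{diag}(D,-D)$ via \cite[5.2.4]{collingwood1993nilpotent}, and read off the weights as signed sums, reusing the type-$B_r$ finale. Your count of $k/2$ zeroes in $D$ is the correct one (the paper's ``$\frac{k-1}{2}$'' in the type-$D$ display is a typo carried over from type $B$, where $k$ is odd), and your extra observation that toggling an index of a zero entry matches the weight multisets of the two half-spin summands is a pleasant and slightly more explicit way to justify that both halves share the same parity statement; the paper achieves the same conclusion more tersely by noting that all $2^r$ weights of the full module $\Lambda^\bullet W$ have a common parity, so any single representative such as the all-positive weight pins down the parity for both summands.
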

\begin{proof}
  The proof goes verbatim to the type $B_r$ proof.
  The key difference in this case is that
  $$
  C(Q)_{0} \cong \text{End}(\Lambda^{2\bullet}W) \oplus \text{End}(\Lambda^{2\bullet +1}W). \ \ \mbox{\cite[20.13]{fulton2013representation}}
    $$
    This yields the two semispin representations of $\mathfrak{so}(V)$.
    Following in the footsteps of  the type $B_r$ proof, we observe that the weights of $h$
    on $\varphi_u^{\ast}\Lambda^{2\bullet} W$ and  $\varphi_u^{\ast} \Lambda^{2\bullet+1} W$
    are the $2^{r-1}$ ways to assign signs to the sum
\[\pm c_1 \pm (c_1 - 1) \pm \dots \pm 1 \pm c_2 \pm \dots \pm 1 \pm \dots \pm c_k \pm \dots \pm 1 \underbrace{\pm 0 \pm \dots \pm 0}_{\frac{k-1}{2} \text{ zeroes}}\]
such that an even (resp. odd) number of the signs are $'+'$.
In particular, the finale of the type $B_r$ proof works here too.
\end{proof}

%

\section{Higher Homotopy Groups}  \label{HiHoGr}

As seen in Corollary \ref{RatHomTypeCorollary}, rational homotopy theory do not differentiate between spaces
$X_u(\Spi_{2r+1})$ and $X_v(\Sp_{2r})$ -- we skip $\bC$ in this section.
However, this can be done by considering several other homotopy groups.
Recall  that, by Bott periodicity, we know the following homotopy groups:
\begin{align*}
\pi_3(H) = \bZ, \pi_4(H) = \bZ/2, \ & \pi_5(H) = \bZ/2, \pi_6(H) = \bZ/12, \\
\pi_3(\Spi_{2r+1} ) = \bZ, \pi_4(\Spi_{2r+1} ) = 0, \ & \pi_5(\Spi_{2r+1} ) = 0, \pi_6(\Spi_{2r+1} ) = 0, \\
\pi_3(\Sp_{2r} ) = \bZ, \pi_4(\Sp_{2r} ) = \bZ/2, \ & \pi_5(\Sp_{2r} ) = \bZ/2, \pi_6(\Sp_{2r} ) = 0.
\end{align*}
where, as before, $H$ is $\SL_2$ or $\PSL_2$.

Knowing these, the long exact sequence~\eqref{long_es} yields some information about homotopy groups of $X_u(G)$, summarised in Table \ref{low-homotopy-BC-table}
\begin{table}[h!]
\renewcommand{\arraystretch}{1.5} 
\centering
\begin{tabular}{ |P{0.8cm}| P{1.8cm}| P{8.5cm} | }
 \hline
 &$\faktor{\Spi_{2r+1} }{H}$&$\faktor{\Sp_{2r} }{H}$\\
 \hline
 $\pi_3$&$\bZ/d\bZ$&$\bZ/d\bZ$\\
 \hline
 $\pi_4$&$0$&$\begin{cases} \bZ/2\bZ & \text{ if } b_4 = 0 \\ 0 & \text{ if } b_4 = 1 \end{cases}$\\
 \hline
 $\pi_5$&$\bZ/2\bZ$&$\begin{cases} \bZ/2\bZ \oplus \bZ/2\bZ \text{ or } \bZ/4\bZ & \text{ if } (b_4,b_5) = (1,1) \\ \bZ/2\bZ & \text{ if } (b_4,b_5) = (1,0) \text{ or } (0,1) \\ 0 & \text{ if } (b_4,b_5) = (0,0) \end{cases}$\\
 \hline
 $\pi_6$&$\bZ/2\bZ$&$\begin{cases} \bZ/2\bZ & \text{ if } b_5 = 0 \\ 0 & \text{ if } b_5 = 1 \end{cases}$\\
 \hline
\end{tabular}
\caption{Low degree homotopy groups of $X_u(G)$}
\label{low-homotopy-BC-table}
\end{table}
where the values $d$, $b_4$ and $b_5$ are given as follows: 
\begin{itemize}
    \item $d$ is the Dynkin index, i.e., the map $d: \pi_3(H) =\bZ \rightarrow \pi_3(G) =\bZ$,
    \item $b_4$ represents the map $b_4: \pi_4(H) = \bZ/2\bZ \rightarrow \pi_4(\Sp_{2r} ) = \bZ/2\bZ$, 
    \item $b_5$ represents the map $b_5: \pi_5(H) = \bZ/2\bZ \rightarrow \pi_5(\Sp_{2r} ) = \bZ/2\bZ$. 
\end{itemize}
By Proposition~\ref{Dynkin_in}, 
these three values are not independent.
We can say a bit more if we can compute $\pi_\ast (\varphi_m)$.
The proof of the next result was explained to us by Michael Albanese \cite{stackex5}.
\begin{prop}
If $G$ is of type $C_{r}$, $r\geq 1$, then
$\pi_k(\varphi_m) = 1$ for $k \leq 5$.
\end{prop}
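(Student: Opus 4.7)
The plan is to factor $\varphi_m$ through the canonical rank-one subgroup of $\Sp_{2r}$ and then exploit the high connectivity of the sphere fibrations $\Sp_{2s} \twoheadrightarrow S^{4s-1}$. First I would identify the highest-root $\SL_2$ in $\Sp_{2r}$ explicitly. Realising $C_r$ via the symplectic form with matrix $J = \bigl(\begin{smallmatrix} 0 & I_r \\ -I_r & 0 \end{smallmatrix}\bigr)$, the highest root is $\alpha = 2e_1$ and an associated $\fsl_2$-triple is $e = E_{1,r+1}$, $f = E_{r+1,1}$, $h = E_{1,1} - E_{r+1,r+1}$. This triple preserves the orthogonal symplectic decomposition $\bC^{2r} = \langle e_1, e_{r+1}\rangle \oplus \langle e_2,\ldots,e_r,e_{r+2},\ldots,e_{2r}\rangle$, acting as $\fsl_2$ on the first summand and by zero on the second. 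Hence $H = \varphi_m(\SL_2)$ is precisely the first block of the standard embedding $\Sp_2 \times \Sp_{2(r-1)} \hookrightarrow \Sp_{2r}$, and $\varphi_m$ factors as
\[
\SL_2 \xrightarrow{\sim} \Sp_2 \hookrightarrow \Sp_{2r},
\]
the first arrow being an isomorphism of groups.

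Next I would analyse the block inclusion via the principal sphere fibrations $\Sp_{2(s-1)} \hookrightarrow \Sp_{2s} \twoheadrightarrow S^{4s-1}$ for $s = 2,\ldots,r$. Because $S^{4s-1}$ is $(4s-2)$-connected and $4s - 2 \geq 6 > 5$ for every $s \geq 2$, the long exact sequence in homotopy forces $\pi_k(\Sp_{2(s-1)}) \to \pi_k(\Sp_{2s})$ to be an isomorphism whenever $k \leq 5$. Composing these inclusions for $s = 2,\ldots,r$, the block embedding $\Sp_2 \hookrightarrow \Sp_{2r}$ induces isomorphisms on $\pi_k$ for $k \leq 5$. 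Combined with the preceding step, this shows that $\pi_k(\varphi_m)$ is the identity in the range $k \leq 5$: for $k = 3$ this recovers the familiar fact that the minimal unipotent has Dynkin index $1$; for $k = 4, 5$ the identifications $\pi_k(\SL_2) = \pi_k(\Sp_{2r}) = \bZ/2$ send generator to generator; and for $k \leq 2$ both groups vanish. The $r = 1$ case is automatic since then $\Sp_2 \hookrightarrow \Sp_{2r}$ is the identity.

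The only part that demands any real care is the initial block factorisation, which is a direct matrix computation using the explicit $\fsl_2$-triple above. Afterwards the argument is simply Bott stability repackaged through the sphere fibrations, and I do not anticipate a substantive obstacle.
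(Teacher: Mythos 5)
Your proof is correct and follows essentially the same route as the paper: factor $\varphi_m$ through the rank-one block, then climb $\Sp_2 \hookrightarrow \Sp_4 \hookrightarrow \cdots \hookrightarrow \Sp_{2r}$ using the long exact sequences of the sphere fibrations $\Sp(s-1) \to \Sp(s) \to S^{4s-1}$. The one substantive addition you make is the explicit $\fsl_2$-triple computation $(e,h,f) = (E_{1,r+1},\, E_{1,1}-E_{r+1,r+1},\, E_{r+1,1})$ verifying that $\varphi_m$ lands in the standard $\Sp_2$-block; the paper simply asserts $\iota \simeq \varphi_m$, so your version is a little more self-contained on that point. One small note: the sphere fibration is for the compact group $\Sp(s)$ (quaternionic unitary) acting on $S^{4s-1} \subset \bH^s$, so you should pass to maximal compact subgroups, as the paper does, rather than working directly with $\Sp_{2s}(\bC)$ — but since the inclusion of the maximal compact is a homotopy equivalence this is a presentational matter, not a gap.
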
 
\begin{proof}
  We begin by noting that $\SL_2 \simeq \Sp(1)$ and $G\simeq \Sp(r)$ (these are the hyperunitary groups).
  Hence, 
  the induced map $\pi_*(\SL_2) \xrightarrow{\varphi_m} \pi_*(\Sp_{2r} )$ may be thought of as a map
  $\pi_*(\Sp(1)) \rightarrow \pi_*(\Sp(r))$.
  Also, note that since $\pi_4(\Sp_{2r} ) = \pi_5(\Sp_{2r} ) = \bZ/2$,
  showing that $\pi_4(\varphi_m) = \pi_5(\varphi_m) = 1$ is equivalent to showing that these maps are isomorphisms.

  Now, 
  the quaternionic unitary group $\Sp(r)$
  acts transitively on the sphere $S^{4r-1} \subset \bH^r$, with stabiliser $\Sp(r-1)$.
  Choosing a base point $x = (0,0,\ldots,0,1)^T \in S^{4r-1} \subset \bH^r$ gives us the maps
\[
\rho: \Sp(r) \rightarrow S^{4r-1}, \
\rho(A) = Ax, \ 
\iota_r: \Sp(r-1) \rightarrow \Sp(r), \
\iota_r(A) = \begin{pmatrix} A & 0 \\ 0 & I_2
\end{pmatrix}
\]
that form 
a fibre bundle
\[
\Sp(r-1) \rightarrow \Sp(r) \xrightarrow{\rho} S^{4r-1} \, ,
\]
which, in its turn, induces a long exact sequence in homotopy groups 
\[ \ldots \to \pi_{k+1}(S^{4r-1}) \to \pi_k(\Sp(r-1)) \xrightarrow{(\iota_r)_*} \pi_k(\Sp(r)) \xrightarrow{\rho_*} \pi_k(S^{4r-1}) \to \ldots\]
Notice that $\pi_{k}(S^{4r-1}) = \pi_{k+1}(S^{4r-1}) = 0$ for $k \leq 4r-3$ and so it immediately follows that
${\pi_k (\iota_r): \pi_k(\Sp(r-1)) \rightarrow \pi_k(\Sp(r))}$ is an isomorphism for $k \leq 4r - 3$. 

Now define $\iota \coloneqq \iota_r \circ \ldots \circ \iota_2$. We observe that
\[
\iota : \Sp(1) \rightarrow \Sp(r), \
\iota (A) = \begin{pmatrix} A & 0 \\ 0 & I_{2r-2} \end{pmatrix}, \ 
\pi_k (\iota) = \pi_k (\iota_r) \circ \ldots \circ \pi_k (\iota_2)\, .
\]
In particular, $\iota \simeq \varphi_m$.
Observe that for $m \geq 2$ we have $4 \leq 5 \leq 4m - 3$ and so 
\[\pi_4 (\iota_m): \pi_4(\Sp(m-1)) \rightarrow \pi_4(\Sp(m)) \mbox{ and } \pi_5 (\iota_m): \pi_5(\Sp(m-1)) \rightarrow \pi_5(\Sp(m))\]
are isomorphisms. Since a composition of isomorphisms is also an isomorphism we have that
\[\pi_4 (\varphi_m) : \pi_4(\Sp(1)) \rightarrow \pi_4(\Sp(r)) \text{ and } \pi_5 (\varphi_m) : \pi_5(\Sp(1)) \rightarrow \pi_5(\Sp(r))\]
are isomorphisms, as required.
\end{proof}

\begin{cor} \label{BC_separate}
For all unipotent elements $u \in \Spi_{2r+1} $, $v \in \Sp_{2r} $, $n \geq 3$, we have $X_u(\Spi_{2r+1} ) \not\simeq X_v(\Sp_{2r} )$.
\end{cor}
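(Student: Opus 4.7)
The plan is to extract enough information from the homotopy-group table preceding this corollary to force a disagreement between $\pi_k(X_u(\Spi_{2r+1}))$ and $\pi_k(X_v(\Sp_{2r}))$ for some small $k$. The just-proved proposition supplies the crucial ingredient: for $G$ of type $C_r$ the induced map $\pi_k(\varphi_m)$ is an isomorphism for $k \leq 5$, which via Proposition~\ref{Dynkin_in} pins down the maps $b_4$ and $b_5$ in terms of the Dynkin index alone.

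First I would dispose of the boundary cases. When $u = v = 1$ the spaces are the groups themselves, and Bott periodicity gives $\pi_4(\Spi_{2r+1}) = 0 \neq \bZ/2 = \pi_4(\Sp_{2r})$. When exactly one of the unipotents is trivial, Corollary~\ref{RatHomTypeCorollary} already forbids a rational homotopy equivalence, so there can be no integral homotopy equivalence either.

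This leaves the principal case $u \neq 1 \neq v$. Let $d$ be the Dynkin index of $v$. By Proposition~\ref{Dynkin_in} combined with the preceding proposition, both $\pi_4(\varphi_v)$ and $\pi_5(\varphi_v)$ are multiplication by $d$ on $\bZ/2$, so $b_4 \equiv b_5 \equiv d \pmod 2$. I would now split on the parity of $d$. If $d$ is even, then $b_4 = 0$, and reading off the right column of the table gives $\pi_4(X_v(\Sp_{2r})) = \bZ/2$, whereas the left column forces $\pi_4(X_u(\Spi_{2r+1})) = 0$. If $d$ is odd, then $b_5 = 1$, the right column gives $\pi_6(X_v(\Sp_{2r})) = 0$, whereas the left column forces $\pi_6(X_u(\Spi_{2r+1})) = \bZ/2$. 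In either subcase $\pi_4$ or $\pi_6$ separates the two spaces.

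The main obstacle has essentially been resolved by the preceding proposition; without knowing that $\pi_k(\varphi_m)$ is an isomorphism for $k = 4, 5$ in type $C_r$, the parities of $b_4$ and $b_5$ could in principle be independent, and one could no longer guarantee that one of $\pi_4$ or $\pi_6$ would supply the required discrepancy. With that coincidence in hand, however, the rest of the argument is a short case check against the table.
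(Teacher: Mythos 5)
Your argument is correct and follows essentially the same route as the paper: the crucial input is the preceding proposition combined with Proposition~\ref{Dynkin_in}, forcing $b_4 \equiv b_5 \equiv d \pmod 2$, after which the table yields a contradiction. The one stylistic difference is that the paper reads off $\pi_5$ in both parities of $d$ (since $\pi_5$ of the type-$C$ quotient is $0$ when $(b_4,b_5)=(0,0)$ and has order $4$ when $(b_4,b_5)=(1,1)$, never matching $\bZ/2$), whereas you split and use $\pi_4$ for $d$ even and $\pi_6$ for $d$ odd; both work. You also handle the boundary cases $u=1$ or $v=1$ explicitly (via Bott periodicity and Corollary~\ref{RatHomTypeCorollary}), which the paper's proof glosses over but which is genuinely needed if the statement is read as covering all unipotent elements.
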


\begin{proof}
 Looking at the table above, we have $b_4 \equiv b_5 \equiv d \,(\text{mod } 2)$ by the proposition.
It follows that $\pi_5\left(X_u(\Spi_{2r+1} )\right) \neq \pi_5\left(X_v(\Sp_{2r} )\right)$ and 
$X_u(\Spi_{2r+1} ) \not\simeq X_v(\Sp_{2r} ).$
\end{proof}

Notice that Corollaries \ref{BC_separate} and \ref{RatHomTypeCorollary} together prove 
Theorem~\ref{main_theorem}.

\section{K-Theory}\label{KTheory}
Recall that if $X\simeq Y$  then we have an isomorphism of the topological K-theories $K(X) \cong K(Y)$ (as graded $\lambda$-rings). 
Topological K-theory is a powerful invariant for working with the spaces $X_u=X_u(G)$. Let us describe it.

Recall that 
the representation ring
of a simple simply-connected group $G$ of rank $r$ is polynomial:
$$
R(G) = \mathbb{Z}[y_1,\dots,y_r] , \ \mbox{ where } \ y_i = [V(\varpi_i)]
$$
and $\varpi_i$ is the $i$-th fundamental weight. 
Similarly,
$$
R(\SL_2(\mathbb{C})) = \mathbb{Z}[x] \geq R(\PSL_2 (\bC)) = \mathbb{Z}[x'], \ \mbox{ where } \ x'=x^2-1
$$
so that  $x  = [V(\varpi_1)]$ is the class of the 2-dimensional simple representation and
$x'  = [V(2\varpi_1)]$ is that of the 3-dimensional one.
We will suppress the prime and just write $R(H) =\mathbb{Z}[x]$ for the representation ring of $H$ in both cases with $d\in\{2,3\}$ being the dimension
of the module represented by $x$.
It is well known that the $G$-equivariant K-theory of a homogeneous space $G/H$ is related to the representation ring of $H$, i.e.,
$K_{G}^0\left( X_u \right) = R(H)$. \cite[Ex. 2.ii]{segal1968equivariant}

The full K-theory of $X_u$ is attainable via the Hodgkin spectral sequence, a version of the Kunneth formula in equivariant K-theory \cite{hodgkin1968equivariant}
(cf. \cite{minami1975k}). The multiplicative 2-periodic spectral sequence 
\[E_2^{*,0} \coloneqq \Tor_{R(G)}^{*}\left(K_G^0(G),K_G^0\left(X_u \right)\right) = \Tor_{R(G)}^{*}\left(\bZ,R(H)\right) \implies K^* ( X_u )\]
collapses, that is, all differentials $\bd_k$ vanish for $k \geq 2$. 
The $R(G)$-module structures are given by
\begin{equation} \label{maps}
\bZ [y_1, \ldots, y_r] \rightarrow \bZ[x], \ y_i \mapsto \overline{y_i}
\ \ \mbox{ and } \ \ 
\bZ [y_1, \ldots, y_r] \rightarrow \bZ, \ y_i \mapsto d_i
\end{equation}
where $d_i = \dim (V(\varpi_i))$ and 
$\overline{y_i}= [\varphi_u^{\ast} V(\varpi_i)]$. The collapse of the multiplicative spectral sequence yields the next result.


\begin{prop}\label{KT_presentation}
There is an isomorphism of abelian groups 
\[K^0 (X_u(G)) \cong \Tor_{R(G)}^{2\bullet }\left(\bZ,R(H)\right) = \bigoplus_{k=0} \Tor_{\bZ[y_1, \ldots, y_r]}^{2k}\left(\bZ,\bZ[x]\right)\]
yielding an isomorphism of graded rings
\[\textup{Gr}\left(K^0 (X_u(G))\right) \cong \Tor_{R(G)}^{2\bullet }\left(\bZ,R(H)\right),\] where $\textup{Gr}(-)$ denotes the associated graded ring. Furthermore, there is an isomorphism of abelian groups 
\[K^1 (X_u(G)) \cong \Tor_{R(G)}^{1+2\bullet}\left(\bZ,R(H) \right) = \bigoplus_{k=0} \Tor_{\bZ[y_1, \ldots, y_r]}^{2k+1}\left(\bZ,\bZ[x]\right) \]
yielding an isomorphism of $\textup{Gr}(K^0(X_u(G)))$-modules.
\end{prop}

Notice that Proposition~\ref{KT_presentation} equips $\textup{Gr}(K^0(X_u))$
with a structure of a graded $\bZ[x]$-algebra. Let us note its grade $0$ piece
\begin{equation} \label{Iu_def}
K^0(X_u)_0 \cong R(H) \otimes_{R(G)} \bZ = \bZ[x] / I_u \ \mbox{ where } \ I_u = (\overline{y_1}-d_1, \ldots ,  \overline{y_r}-d_r ) \, .
\end{equation}
If $X_u\simeq X_v$,
we get two graded algebra structures on the same ring. Yet we can relate the zero-degree pieces (cf. Theorem~\ref{2nd_thm})!

To compute in Proposition~\ref{KT_presentation}, we need a DG-algebra resolution.
Since
$\bZ \cong \bZ[\underline{y}] /(y_1-d_1,\dots,y_r-d_r)$
as $R(G)$-modules, a convenient resolution
is given by the Koszul complex (cf. Section~\ref{code} for codes) 
$\; \bK_\bullet (y_1-d_1, \ldots, y_r-d_r)$
\[
0 \longrightarrow \bigwedge^r \mathbb{Z}[\underline{y}]^r \longrightarrow \bigwedge^{r-1} \mathbb{Z}[\underline{y}]^r\longrightarrow \dots \longrightarrow \mathbb{Z}[\underline{y}]^r \xrightarrow{(y_i-d_i)_{i=1}^r}\mathbb{Z}[\underline{y}] \longrightarrow 0.\]

Tensoring with $\bZ[x]$ gives us a Koszul complex over $\bZ[x]$
$$
\bK_\bullet (y_1-d_1, \ldots , y_r-d_r) \otimes_{\mathbb{Z}[\underline{y}]} \mathbb{Z}[x] =
\bK_\bullet ( \overline{y_1}-d_1, \ldots ,  \overline{y_r}-d_r) 
$$
that, in turn, yields the master formula
\begin{equation}\label{computeTor}
  \Tor_{R(G)}^{l}\left(\bZ,R(H)\right) = H_l \left( \bK_\bullet ( \overline{y_1}-d_1, \ldots ,  \overline{y_r}-d_r) \right)
\  \mbox{ for all } l \, .
\end{equation}
The homology of the Koszul complex over $\bZ [x]$ is computable but not to the extent of the next lemma.
A careful reader may observe that it also works (after careful adjustment) over a Dedekind domain.
\begin{lem}\label{DedkindKoszullemma}
  Suppose $R$ is a PID.
  Let $\bK_\bullet = \bK_\bullet (a_1,\dots,a_m)$, $a_i \in R$ denote the Koszul complex over $R$
\[0 \rightarrow R \xrightarrow{\bd_m} R^m \xrightarrow{\bd_{m-1}} R^{\binom{m}{m-2}} \xrightarrow{\bd_{m-2}} \dots \longrightarrow R^{\binom{m}{2}} \xrightarrow{\bd_2} R^m \xrightarrow{\bd_1} R \rightarrow 0 \]
Setting $\bd_0 = \bd_{n+1} = 0$ and $a=\gcd(a_1,\ldots,a_m)$, we have
\[ H_i (\bK_\bullet) = \left(\faktor{R}{\left(a\right)}\right)^{\binom{m-1}{i}} \text{ for all }0 \leq i \leq m.\]
\end{lem}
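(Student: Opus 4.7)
The plan is to reduce the general Koszul complex $\bK_\bullet(a_1,\ldots,a_m)$ to one built from a single element $a$ together with $m-1$ zeros, by a change of basis of $R^m$ available because $R$ is a PID, and then read the answer off via multiplicativity of the Koszul complex together with the Künneth formula. Throughout I assume $a \neq 0$ (if $a=0$ all $a_i$ vanish and the claim needs reinterpretation; this case is not needed for the application).

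First I would perform the change of basis. Write $a_i = a u_i$; then $\gcd(u_1,\ldots,u_m)=1$, and over a PID the unimodular row lemma (equivalently, Smith normal form applied to the $1\times m$ matrix $(a_1,\ldots,a_m)$) supplies $M\in GL_m(R)$ whose first row is $(u_1,\ldots,u_m)$. The corresponding automorphism of $R^m$ transforms the linear functional $(a_1,\ldots,a_m)\colon R^m\to R$ into $(a,0,\ldots,0)$. Since the Koszul complex is functorial in its defining functional, this gives an isomorphism of complexes
\[
\bK_\bullet(a_1,\ldots,a_m)\;\cong\;\bK_\bullet(a,0,\ldots,0).
\]
Next I would use the standard tensor factorisation $\bK_\bullet(b,c)\cong \bK_\bullet(b)\otimes_R \bK_\bullet(c)$, iterated $m-1$ times, to get
\[
\bK_\bullet(a,0,\ldots,0)\;\cong\;\bK_\bullet(a)\otimes_R \bK_\bullet(0)^{\otimes(m-1)}.
\]

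Finally I would compute the homology of the right-hand side by Künneth. For $a\neq 0$, $\bK_\bullet(a)=(R\xrightarrow{a}R)$ has $H_0=R/(a)$ and $H_1=0$, while $\bK_\bullet(0)^{\otimes(m-1)}$ has zero differential, so its homology is the free module $\Lambda^\bullet R^{m-1}$ with $H_j=R^{\binom{m-1}{j}}$. All terms of every complex in sight are free $R$-modules and one Künneth factor is free in homology, so the Tor contributions vanish, yielding
\[
H_i(\bK_\bullet)\;=\;H_0(\bK_\bullet(a))\otimes_R H_i\bigl(\bK_\bullet(0)^{\otimes(m-1)}\bigr)\;=\;(R/(a))^{\binom{m-1}{i}},
\]
as claimed.

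The only nontrivial step is the change of basis: it uses the PID hypothesis essentially (through Smith normal form, or equivalently the unimodular row lemma). Everything afterwards is formal. The footnote's hint about Dedekind domains is consistent with this: over a Dedekind base one cannot in general trivialise the row $(a_1,\ldots,a_m)$ to $(a,0,\ldots,0)$ on the nose, but one can localise at each prime (where $R$ becomes a DVR, hence a PID) and the resulting local computations glue to give the same answer for the homology.
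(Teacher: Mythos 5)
Your proof is correct and takes a genuinely different (and arguably cleaner) route than the paper. The paper's proof localises at each prime $p$ of $R$, rescales the $a_i$ by units in $R_{(p)}$ to reduce to $\bK_{(p)}(p^{k_1},\ldots,p^{k_m})$, then computes $H_i$ by direct inspection of the Koszul matrices (identifying the columns spanning $\ker(\bd_i)$ modulo $\im(\bd_{i+1})$ and counting them as $\binom{m-1}{i}$), and finally reassembles the global answer from the local ones. Your argument instead performs a single global change of basis of $R^m$ via the unimodular row lemma, exploiting the PID hypothesis once and for all, to replace the defining functional $(a_1,\ldots,a_m)$ by $(a,0,\ldots,0)$, after which the answer drops out of $\bK_\bullet(b,c)\cong\bK_\bullet(b)\otimes_R\bK_\bullet(c)$ and K\"unneth with no case analysis. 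What the paper's local approach buys, and what the footnote hints at, is that it adapts directly to a Dedekind base (where you cannot trivialise a unimodular row globally but localisations are DVRs, hence PIDs); your reduction as stated uses the global completability of unimodular rows, which is special to PIDs, and the Dedekind case would indeed require you to localise --- exactly the remark you make at the end. Both proofs implicitly assume $a\neq 0$; you flag this correctly, whereas the paper leaves it tacit (and the statement is in fact false for $a=0$, since then $H_i\cong R^{\binom{m}{i}}$, not $R^{\binom{m-1}{i}}$).
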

\begin{proof}
%
Pick a prime $p\in R$. Consider the localised Koszul complex
  $$
  \bK_{(p)} (a_1,\dots,a_m) = \bK (a_1,\dots,a_m) \otimes_R R_{(p)} \, .
  $$
  For any units $u_i \in R_{(p)}$, the complexes
  $\bK_{(p)} (a_1,\ldots,a_m)$ and $\bK_{(p)} (u_1a_1,\dots,u_ma_m)$ are quasi-isomorphic via the obvious chain map.
  In particular, they share the same homology. Thus, by choosing the $u_i$ such that $u_ia_i = p^{k_i}$ for some $k_i$,
  and assuming (without loss of generality) that $k_1 \leq \dots \leq k_m$, we conclude that
  $$
  H_\bullet\big( \bK_{(p)} (a_1,\ldots,a_m) \big) = H_\bullet\big(\bK_{(p)} ( p^{k_1},\ldots,p^{k_m})\big)
  \, .
  $$
  By inspection of the differentials $\bd_i$ of $\bK_{(p)} ( p^{k_1},\ldots,p^{k_m})$,
    we see that $\im(\bd_{i+1})$ is given by $R_{(p)}$-linear combinations of those columns $c$ that contain a $p^{k_1}$ term. Moreover, since $p^{k_1}$ divides each $p^{k_j}, j \geq 1$, we have ${p^{-k_1}}\cdot c \in {R_{(p)}}^{\binom{n}{i}}$ for each such column $c$. In particular, we have ${p^{-k_1}}\cdot c \in \ker(\bd_i)$ for all such $c$. Furthermore, any element of the kernel is a linear combination of these scaled columns. Now, there are precisely $\binom{m-1}{i}$ of these columns, namely the number of ways to choose exactly $i$ elements without replacement from the set $\{p^{k_2},\dots,p^{k_m}\}$. 
Combining both observations, we conclude that
\[H_i (\bK_{(p)}) = 
\bigoplus_{i=1}^{\binom{m-1}{i}}\, \faktor{\mathbb{Z}_{(p)}}{p^{k_1}\mathbb{Z}_{(p)}} =
\left( \faktor{\mathbb{Z}_{(p)}}{\gcd (p^{k_1},\dots,p^{k_m})} \right)^{\binom{m-1}{i}} .\]
Let $P$ be the set of all primes in $p\in R$ such that $p$ divides one of the $a_i$.
If $p\not\in P$, $k_1=0$ and the localisation at such $p$ ``kills'' homology of $\bK_\bullet$. 
A direct sum of localisations at $p\in P$ gives us the required result.
\end{proof}

Recall that the ideal $I_u$ is defined in \eqref{Iu_def}. We are ready for the main result of the section.
\begin{theorem} \label{2nd_thm}
If $X_u\simeq X_v$, then $I_u=I_v$.
\end{theorem}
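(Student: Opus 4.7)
By Theorem~\ref{main_theorem}, a homotopy equivalence $X_u\simeq X_v$ forces $G_u\cong G_v$, which we denote $G$. Proposition~\ref{low_pi} (via $\pi_2$) then forces $H_u\cong H_v$, which we denote $H$. Hence $R(G)=\bZ[y_1,\ldots,y_r]$ and $R(H)=\bZ[x]$ are canonically identified for both $u$ and $v$, so $I_u$ and $I_v$ genuinely live as ideals in the same ring $\bZ[x]$. A homotopy equivalence $f:X_u\to X_v$ induces a graded $\lambda$-ring isomorphism $f^\ast:K^0(X_v)\xrightarrow{\sim}K^0(X_u)$, and the aim is to show this forces $I_u=I_v$.

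The strategy is to characterise $I_u$ as the kernel of a map $\bZ[x]\to K^0(X_u)$ that is intrinsic to the homotopy type. The natural candidate is the associated-bundle homomorphism
\[\rho_u:R(H)=\bZ[x]\longrightarrow K^0(X_u),\quad [V]\mapsto [G\times_H V],\]
which by Proposition~\ref{KT_presentation} and~\eqref{Iu_def} factors as $\bZ[x]\twoheadrightarrow\bZ[x]/I_u\hookrightarrow K^0(X_u)_0$ with kernel exactly $I_u$. It therefore suffices to prove the compatibility $f^\ast\circ\rho_v=\rho_u$.

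For this compatibility one needs two ingredients. First, that the Hodgkin degree-zero subquotient $K^0(X_u)_0$ is an intrinsic feature of $K^0(X_u)$, which should follow by comparing the Hodgkin filtration with the $\gamma$-filtration (or the Atiyah-Hirzebruch filtration) on the $\lambda$-ring $K^0(X_u)$. Second, that the generator $\rho_u(x)\in K^0(X_u)_0$ is pinned down intrinsically by its rank $d\in\{2,3\}$ and its behaviour under $\lambda$-operations: for $H=\SL_2(\bC)$ one has $\lambda^2\rho_u(x)=1$, while for $H=\PSL_2(\bC)$ the exterior powers $\lambda^k\rho_u(x)$ satisfy the relations coming from decomposing exterior powers of the $3$-dimensional representation. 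Once $\rho_u(x)$ is characterised by such universal $\lambda$-ring data, the compatibility $f^\ast\rho_v(x)=\rho_u(x)$ follows because $f^\ast$ is a $\lambda$-ring isomorphism.

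The principal obstacle lies in the second ingredient: as highlighted in the paragraph preceding the theorem, the two graded $\bZ[x]$-algebra structures arising from the respective Hodgkin presentations are a priori distinct inside the common ring $K^0(X_u)=K^0(X_v)$, so the elements playing the role of $x$ may differ. One must rule out the possibility that $f^\ast\rho_v(x)$ and $\rho_u(x)$ differ by a non-trivial $\lambda$-ring automorphism of $K^0(X_u)$, for such a difference would yield only $\bZ[x]/I_u\cong\bZ[x]/I_v$ up to an automorphism of $\bZ[x]$ rather than the equality $I_u=I_v$ of ideals. Alternatively, one could attempt to extract $I_u$ from the intrinsic subring $\im(\rho_u)\subset K^0(X_u)$, provided that subring can be characterised internally (for instance as the kernel of all differentials beyond the edge of the Hodgkin SS), which reduces the problem to an identification of canonical subalgebras.
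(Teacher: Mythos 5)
You correctly set up the associated-bundle map $\rho_u\colon\bZ[x]=R(H)\to K^0(X_u)$ with kernel $I_u$, and you correctly identify the central difficulty: to conclude $I_u=I_v$ as ideals (rather than merely $\bZ[x]/I_u\cong\bZ[x]/I_v$ as rings) you would need to pin down the image of $x$ intrinsically, and a nontrivial $\lambda$-ring automorphism of $K^0(X_u)$ could defeat that. This is a genuine gap, not a technicality: nothing in your outline rules out such an automorphism, and your first ingredient --- that the Hodgkin degree-zero piece $K^0(X_u)_0$ is intrinsic to the homotopy type, via comparison with the $\gamma$- or Atiyah--Hirzebruch filtration --- is also left as an unverified hope. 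As it stands, the proposal proves at most $\bZ[x]/I_u\cong\bZ[x]/I_v$ as $\lambda$-rings, which is weaker than the stated claim.

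The paper's proof sidesteps the identification of $x$ entirely, and it is worth seeing how. It first observes that $u$ and $v$ are simultaneously quite even or not (by $\pi_2$), so the same $d\in\{2,3\}$ applies, and that $\sqrt{I_u}=\sqrt{I_v}=(x-d)$ because $x-d$ lies in the reduced K-theory $\widetilde K(X_u)$ of a compact manifold and is therefore nilpotent. Thus for each prime $p$ the reductions are $\overline{I_u}=(x-d)^a$ and $\overline{I_v}=(x-d)^b$ in $\bF_p[x]$. Lemma~\ref{DedkindKoszullemma} then expresses the mod-$p$ Koszul homology purely in terms of $a$ (resp.\ $b$), while the universal coefficient theorem writes both reduced homologies as extensions of the \emph{same} groups $K^0(X_u)\otimes\bF_p$ and the $p$-torsion of $K^1(X_u)$, which a homotopy equivalence preserves. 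A dimension count forces $a=b$, so $\overline{I_u}=\overline{I_v}$ for every $p$, and a short finiteness argument (on the quotient $(I_u+I_v)/I_u$, which would otherwise carry a $\bQ$-vector space structure yet be a finitely generated $\bZ[x]$-module) upgrades this to $I_u=I_v$. In short, the paper works with invariant numerical data (ranks and torsion of $K^*$) rather than trying to locate a distinguished generator of $\bZ[x]$ inside the $\lambda$-ring, which is exactly the step your approach cannot complete.
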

\begin{proof}
  Notice that $u$ and $v$ are both quite even or not quite even. This means we can use the same $d\in\{2,3\}$ for the two realisations of
  $K_0(X_u)$ as the homology of the Koszul complexes
  \[K_0(X_u) \cong H_{2\bullet} (\bK (u)) \cong H_{2\bullet} (\bK (v))\, .\]
  Moreover, we know that
  \[H_{0} (\bK (u)) = \bZ[x] / I_u,  \ H_{0} (\bK (v)) = \bZ[x] / I_v \ \mbox{ and } \ \sqrt{I_u} = \sqrt{I_v} = (x-d).\]
  The latter follows from the fact that $x-d$ is a nilpotent element in $K(X_u)$.
  Indeed, $x-d$ belongs to the reduced K-theory $\widetilde{K} (X_u)$, the kernel of the map $K(X_u) \rightarrow \bZ$.
  As observed before Proposition~\ref{Xu_dim},  $X_u\simeq G_c/H_c$ and the latter is compact, so can be covered by $m$ open
  contractible subsets. It is a standard topological fact that $\widetilde{K} (X_u)$ is $m$-nilpotent \cite[Ex. 2.13]{HatcherVB}.

  Let $\bF=\bZ / (p)$. In $\bF[x]$ the reduced ideals $\overline{I_u}$ and $\overline{I_v}$
  are still contained in $(x-d)$, hence, $\overline{I_u}=(x-d)^a$ and $\overline{I_v}=(x-d)^b$
  for some positive integers $a$ and $b$.

  Now compare homology of Koszul complexes modulo a prime $p$. By Lemma~\ref{DedkindKoszullemma},
  \[H_{2\bullet} (\bK(u)\otimes_{\bZ} \bF) \cong \Lambda^{2\bullet} \left(\dfrac{\bF[x]}{(x-d)^a}\right)^{r-1}, \
  H_{2\bullet} (\bK(v)\otimes_{\bZ} \bF) \cong \Lambda^{2\bullet} \left(\dfrac{\bF[x]}{(x-d)^b}\right)^{r-1} . \]
  By the universal coefficients theorem, both reduced Koszul complexes fit into the same short exact sequence
  of vector spaces
\[
0 \rightarrow
H_{2\bullet} (\bK)\otimes_{\bZ} \bF \rightarrow
H_{2\bullet} (\bK \otimes_{\bZ} \bF) \rightarrow
\mbox{Tor}_1^{\bZ}(H_{-1+2\bullet} (\bK) , \bF)
\rightarrow 0\]  
that can be rewritten with $c\in \{a,b\}$ as 
\[
0 \rightarrow
K^0 (X_u)\otimes \bF \rightarrow
\Lambda^{2\bullet} \left(\dfrac{\bF[x]}{(x-d)^c}\right)^{r-1}
\rightarrow
p\mbox{\,-Tor} ( K^1 (X_u))
\rightarrow 0\, . \]
Dimension counting proves that $a=b$. Thus, $\overline{I_u} = \overline{I_v}$ for every prime $p$.

It remains to show that this implies $I_u = I_v$.
Consider the quotient $Q_u \coloneqq (I_u+I_v)/I_u$.
We will show that $Q_u = 0$. Analogously, $(I_u+I_v)/I_v=0$ and $I_u = I_v$, as required.

Since $\overline{I_u} = \overline{I_v}$ for each prime $p$,
we have $Q_u \otimes_{\bZ} \mathbb{Z}/p\mathbb{Z} = 0$ for all $p$.
Furthermore, $I_u + I_v\subseteq \bZ[x]$ has no $p$-torsion.
Looking at the map $\Tor_1(I_u+I_v,\mathbb{Z}/p\mathbb{Z}) \rightarrow \Tor_1(Q_u,\mathbb{Z}/p\mathbb{Z})$,
we conclude that $\Tor_1(Q_u,\mathbb{Z}/p\mathbb{Z}) = 0$.
Tensoring a projective resolution of $\mathbb{Z}/p\mathbb{Z}$ with $Q_u$ gives us that the sequence 
\[ 0 \longrightarrow Q_u \otimes \mathbb{Z} \xrightarrow{p} Q_u \otimes \mathbb{Z} \longrightarrow \underbrace{Q_u \otimes \mathbb{Z}/p\mathbb{Z}}_{0}\]
is exact. 
Therefore, multiplication by $p$ is an automorphism of $Q_u$ for each prime p,
or, in other words, $Q_u$ has the structure of a $\bQ$-vector space.
This turns $Q_u$ into a $\bQ[x]$-module.

Finally, suppose $Q_u\neq 0$. 
Choose a maximal submodule $\mathfrak{m}\subseteq Q_u$ and consider $Q_u / \mathfrak{m}$.
It is a simple $\bQ [x]$-module, so it is a number field.
Now observe that all  $I_u$, $I_v$, $Q_u$ and $Q_u / \mathfrak{m}$ are finitely generated $\bZ [x]$-modules.
A contradiction, since a number field is not a finitely generated $\bZ [x]$-module.
\end{proof}  

\section{Example: Homogeneous spaces of dimension \texorpdfstring{$75$}{75}}\label{Example}
In light of Proposition~\ref{Xu_dim}, it is interesting to consider spaces dimension by dimension.
Suppose $\dim X_u(G)=75$.
There are no simple groups of dimension 75 but there are three groups of dimension 78: types $B_6$, $C_6$ and $E_6$.
It follows that $u\neq 1$: there are 93 potentially different spaces (the same as the number of non-trivial unipotent classes in these groups).

Theorem~\ref{main_theorem} splits the spaces into three buckets: spaces coming from different groups are not homotopy equivalent.
Then we can make use of Dynkin indices, cf. Proposition \ref{low_pi}.
In the case of $G$ of type $E_6$ all Dynkin indices are distinct, cf. \cite[Table 18]{Dynkin1952}.
Thus, all 20 non-trivial unipotent classes give 
(homotopically) distinct space $X_u$.

In the cases of $B_6$ and $C_6$, the numbers of orbits are 34 and 39 correspondingly.
The Dynkin index does not distinguish all spaces completely, leaving only a small number of cases to consider.
Note that some of them are distinguished using $\pi_2 (X_u)$, cf. Proposition \ref{low_pi}.
\begin{itemize}
    \item Remaining orbits in type $B_6$ and their Dynkin indices
    {
\renewcommand{\arraystretch}{1.2} 

\begin{longtable}{ |P{3.5cm}| P{3.5cm} |P{2cm}|P{2cm}|}
\hline
 Partition of $13$ & Dynkin diagram \newline              \begin{tikzpicture}[scale=1]
                \dynkin B{6}
                \node[above=0.5ex] at (root 1) {$1$};
                \node[above=0.5ex] at (root 2) {$2$};
                \node[above=0.5ex] at (root 3) {$3$};
                \node[above=0.5ex] at (root 4) {$4$};
                \node[above=0.5ex] at (root 5) {$5$};
                \node[above=0.5ex] at (root 6) {$6$};
              \end{tikzpicture}
  & Dynkin Index & Quite even? (Y/N)\\
  \hline
    $[5^2, 1^3]$ & $\triangle(0, 2, 0, 2, 0, 0)$ & $20$ & Y\\
    \hline
    $[5, 4^2]$ & $\triangle(1, 0, 1, 1, 0, 1)$ & $20$ & N\\
    \hline
    $[5, 3, 1^5]$ & $\triangle(2, 0, 2, 0, 0, 0)$ & $12$ & Y\\
    \hline
    $[5, 2^4]$ & $\triangle(2, 1, 0, 0, 0, 1)$ & $12$ & N\\
    \hline    
    $[4^2, 3, 1^2]$ & $\triangle(0, 1, 1, 0, 1, 0)$ & $12$ & N\\
    \hline
    $[5, 2^2, 1^4]$ & $\triangle(2, 1, 0, 1, 0, 0)$ & $11$ & N\\
    \hline
    $[4^2, 2^2, 1]$ & $\triangle(0, 2, 0, 0, 0, 1)$ & $11$ & N\\
    \hline
    $[5, 1^8]$ & $\triangle(2, 2, 0, 0, 0, 0)$ & $10$ & N\\
    \hline
    $[4^2, 1^5]$ & $\triangle(0, 2, 0, 1, 0, 0)$ & $10$ & N\\
    \hline
    $[3^2, 1^7]$ & $\triangle(0, 2, 0, 0, 0, 0)$ & $4$ & Y\\
    \hline
    $[3, 2^4, 1^2]$ & $\triangle(1, 0, 0, 0, 1, 0)$ & $4$ & N\\
    \hline
    $[3, 2^2, 1^6]$ & $\triangle(1, 0, 1, 0, 0, 0)$ & $3$ & N\\
    \hline
    $[2^6, 1]$ & $\triangle(0, 0, 0, 0, 0, 1)$ & $3$ & N\\
    \hline
    $[3, 1^{10}]$ & $\triangle(2, 0, 0, 0, 0, 0)$ & $2$  & N\\
    \hline
    $[2^4, 1^5]$ & $\triangle(0, 0, 0, 1, 0, 0)$ & $2$ & N\\
    \hline
\end{longtable}
}
    \item Remaining orbits in type $C_6$ and their Dynkin indices.
    {
    
\renewcommand{\arraystretch}{1.2} 
\begin{longtable}{ |P{3.5cm}| P{3.5cm}|P{2cm}|P{2cm}|}
 \hline
 Partition of $12$ & Dynkin diagram \begin{tikzpicture}[scale=1]
                \dynkin C{6}
                \node[above=0.5ex] at (root 1) {$1$};
                \node[above=0.5ex] at (root 2) {$2$};
                \node[above=0.5ex] at (root 3) {$3$};
                \node[above=0.5ex] at (root 4) {$4$};
                \node[above=0.5ex] at (root 5) {$5$};
                \node[above=0.5ex] at (root 6) {$6$};
              \end{tikzpicture} & Dynkin Index & Quite even? (Y/N) \\
    \hline
    $[4, 2, 1^6]$ & $\triangle(2, 0, 1, 0, 0, 0)$ &$11$ & N \\
    \hline
    $[3^2, 2^3]$ & $\triangle(0, 1, 0, 0, 1, 0)$ &$11$ & N\\
    \hline
    $[4, 1^8]$ & $\triangle(2, 1, 0, 0, 0, 0)$ &$10$ & N\\
    \hline
    $[3^2, 2^2, 1^2]$ & $\triangle(0, 1, 0, 1, 0, 0)$ & $10$ & N\\
    \hline
\end{longtable}
}
\end{itemize}

Finally, we can deploy Theorem~\ref{2nd_thm}
to help distinguish the remaining cases.
We pick a prime $p$ and compute (cf. Section~\ref{code} for codes) 
  $\gcd(\overline{y_1}-d_1,\ldots, \overline{y_r}-d_r) \in \bF[x]$, 
the generator of the ideal
$\overline{I_u}$.
  This does not help in type $C_6$ and in one of the cases in type  $B_6$.

\renewcommand{\arraystretch}{1.2} 

\begin{longtable}{ |P{2.5cm}| P{2.5cm} |P{1.5cm}| P{4.5cm}|}
\hline
 Partition of $13$ & Dynkin index
  & char($\mathbb{F}$) & $\overline{I_u}$\\
  \hline
    $[5, 2^4]$ & $12$ & $3$ & $\left((x-2)^3\right)$ \\
    \hline
    $[4^2 , 3, 1^2]$ & $12$ & $3$ & $\left((x-2)^2\right)$\\
    \hline
    $[5, 2^2, 1^4]$ & $11$ & $-$ & no suitable $p$ found \\
    \hline
    $[4^2, 2^2, 1]$ & $11$ & $-$ & no suitable $p$ found \\
    \hline
    $[5, 1^8]$ & $10$ & $2$ & $\left(x^2\right)$\\
    \hline
    $[4^2, 1^5]$ & $10$ & $2$ & $\left(x^4\right)$\\
    \hline
    $[3, 2^2, 1^6]$ & $3$ & $3$ & $\left((x-2)^2\right)$ \\
    \hline
    $[2^6, 1]$ & $3$ & $3$ & $\left((x-2)^3\right)$ \\
    \hline
    $[3, 1^{10}]$ & $2$  & $2$ & $\left(x^2\right)$\\
    \hline
    $[2^4, 1^5]$ & $2$ & $2$ & $\left(x^4\right)$\\
    \hline
\end{longtable}

Therefore, in the case of homogeneous spaces of dimension $75$ we have

\begin{prop}\label{finalprop}
  Let $G, K$ be simply-connected groups of types $B_6$, $C_6$ or $E_6$. Suppose $u \in G$, $v \in K$ are unipotent elements from different classes.
  Then $X_u(G) \not\simeq X_{v}(K)$ in all but the three cases, where it is undetermined:
\begin{enumerate}
    \item $G=K=\Spi_{13}(\bC)$, $u,v$ correspond to partitions $[5,2^2,1^4]$ and $[4^2,2^2,1]$,
    \item $G=K=\Sp_{12} (\bC)$, $u,v$ correspond to partitions $[4,2,1^6]$ and $[3^2,2^3]$,
    \item $G=K=\Sp_{12} (\bC)$, $u,v$ correspond to partitions $[4,1^8]$ and $[3^2,2^2,1^2]$.
\end{enumerate}
\end{prop}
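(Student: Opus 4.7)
The plan is to layer three homotopy invariants in order of increasing discriminating power: the Dynkin index (via $\pi_3$), the quite-even property (via $\pi_2$), and reductions of the ideal $I_u$ modulo small primes (via Theorem~\ref{2nd_thm}).

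First, I would invoke Theorem~\ref{main_theorem} to reduce to the case $G\cong K$, since the three groups of types $B_6$, $C_6$, $E_6$ are pairwise non-isomorphic (and in particular the $B_6$/$C_6$ possibility is ruled out by Corollary~\ref{BC_separate}). In the case $G=E_6$, consultation of \cite[Table~18]{Dynkin1952} shows that the $20$ non-trivial unipotent classes have pairwise distinct Dynkin indices, so Proposition~\ref{low_pi} separates them via $\pi_3(X_u)=\bZ/\nu(u)$; no further work is required for $E_6$.

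For $G=\Spi_{13}(\bC)$ and $G=\Sp_{12}(\bC)$ I would tabulate the non-trivial unipotent classes together with their Dynkin indices and their quite-even status (these are recorded in the two long tables above). Pairs with distinct Dynkin indices are separated by $\pi_3$, and pairs within a common Dynkin-index bucket but of differing quite-even status are separated by $\pi_2$ via Proposition~\ref{low_pi}. This leaves only pairs $(u,v)$ that share both invariants. For each such pair I would apply Theorem~\ref{2nd_thm}: if $X_u\simeq X_v$ then $I_u=I_v$, and hence their reductions $\overline{I_u},\overline{I_v}\subseteq\bF_p[x]$ agree for every prime $p$. Since both ideals have radical $(x-d)$, each reduces to $\bigl((x-d)^{a_p}\bigr)$ for some exponent, so it suffices to find a single prime $p$ at which the two exponents differ. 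A direct computation of the relevant Koszul complex in $\bF_p[x]$, using the homological description \eqref{computeTor} together with Lemma~\ref{DedkindKoszullemma} and the generators $\overline{y_i}-d_i$, handles every remaining pair save the three listed in the statement.

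The main obstacle is precisely those three residual pairs. An exhaustive scan over small primes returns $\overline{I_u}=\overline{I_v}$ in every case, so the mod-$p$ Koszul-ideal invariant simply does not distinguish them; this is the content of the ``no suitable $p$ found'' entries in the final table. To settle these three cases one would need a strictly finer invariant — for example, the graded $\bZ[x]$-module structure on $K^1(X_u)$ from Proposition~\ref{KT_presentation}, higher Koszul homology beyond degree $0$, or a $\lambda$-ring refinement of Theorem~\ref{2nd_thm} — and I would not expect to resolve them here; hence they appear in the conclusion as undetermined.
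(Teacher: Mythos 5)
Your proposal matches the paper's proof: layer the invariants in the same order — first Theorem~\ref{main_theorem} (separating $B_6$, $C_6$, $E_6$), then Dynkin index via $\pi_3$, then quite-even status via $\pi_2$, and finally the mod-$p$ reduction $\overline{I_u}$ from Theorem~\ref{2nd_thm} — and conclude that the same three pairs remain undetermined. The only cosmetic difference is that you phrase the last step via Koszul homology and Lemma~\ref{DedkindKoszullemma}, while the paper computes $\gcd(\overline{y_1}-d_1,\ldots,\overline{y_r}-d_r)$ directly; these are equivalent by the lemma, and the substance of the argument is the same.
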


\begin{question}
What techniques can be used to determine whether or not the three undetermined pairs of spaces in Proposition~\ref{finalprop} are homotopy equivalent?   
\end{question}

\section{Example: The effect of Dynkin diagram automorphisms}\label{Example2}
Now we describe the effect of diagram automorphisms on Conjecture~\ref{wk_rigid}:
cf. \cite{collingwood1993nilpotent} for all the relevant information.

In types $E_6$ and $A_r$, $r\geq 2$ the automorphism group of the diagram is $S_2$, of order 2. We remark that we write $S_2$, rather than the more common $C_2$, to avoid confusion between the cyclic group of order $2$, and the group of type $C$, rank $2$.
However, there is no effect on Conjecture~\ref{wk_rigid}: distinct conjugacy classes are not conjugate under outer automorphisms.

In type $D_r$, $r\geq 5$ the automorphism group of the diagram is also $S_2$ but there exist
distinct conjugacy classes conjugate under outer automorphisms.
Each {\em very even partition} (those only with even parts) correspond to the two distinct classes,
conjugate under an outer automorphism.
Thus, the  Conjecture~\ref{wk_rigid} proposes a bijection between homotopy types of spaces $X_u$ and partitions in
$\mP(2r)$, where every even part $d_i$ appears even number of times.

In type $D_4$, we have triality: the automorphism group of the diagram is $S_3$.
There are two very even partitions $[2^4]$ and $[4^2]$, each giving a pair of classes in $\Spi_8 (\bC)$.
However, each pair is conjugate to a third class under the triality:
the classes with partitions $[3,1^5]$ and $[2^4]$ are conjugate,
ditto for $[5,1^3]$ and $[4^2]$.
Thus, the  Conjecture~\ref{wk_rigid} proposes a bijection between homotopy types of spaces $X_u$ and
partitions in $\mP(8)$, which have odd parts and all even parts appear even number of times.

In fact, the conjecture holds in this case. There are five non-trivial partitions left to consider:
$[2^2,1^4]$, $[3,2^2,1]$, $[3^2,1^2]$, $[5,3]$ and $[7,1]$. Their Dynkin indices are all distinct (computed by
\cite[Th. 2.1]{PANYUSHEV201515}): 1, 3, 4, 12 and 28.

Let us examine the aforementioned trios of unipotent elements $u_1,u_2,u_3 \in \Spi_8 (\bC)$, related by triality,
in light of Corollary~\ref{remaining_groups}.
By Theorem~\ref{QuiteEvenClassical}, all $u_i$ are even, not quite even elements.
Thus, in $\Spi_8 (\bC)$ they correspond to three subgroups
$\SL_2 (\bC)$ that are related via the outer automorphisms. Their $-I_2$ are the three different elements of $Z(G) \cong K_4$.
In any of the three quotients $\Spi_8 (\bC)/S_2$, one of the subgroups turns into $\PSL_2 (\bC)$, while the other two stay as $\SL_2 (\bC)$.

\section{Computer Code} \label{code}
The first author has put all the codes used into his GitHub repository \cite{GitHub_DJ}.
For Koszul Complex, it utilises {\em Sagemath} that contains a {\em KoszulComplex} class which can be used alongside the {\em ascii{\_}art} function to display these differentials nicely.
For Lie algebra calculations, the code is is written in {\em Sage} and makes use of the {\em LiE} package.

\section{Declarations}

{\bfseries Conflict of Interest:} The authors declare no competing interests.

{\bfseries Data Availability:} Data sharing not applicable to this article as no datasets were generated or analysed during the current study.

\bibliography{refs_TOP}
\bibliographystyle{plain}

\end{document}